\def\thefigure{\thesection.\@arabic\c@figure}
\def\fps@figure{h,t}
\def\thetable{\thesection.\@arabic\c@table}
\def\fps@table{h, t}
\newtheorem{theorem}{Theorem}
\newtheorem{corollary}[theorem]{Corollary}
\newtheorem{example}[theorem]{Example}
\newtheorem{lemma}[theorem]{Lemma}
\newtheorem{proposition}[theorem]{Proposition}
\newtheorem{remark}[theorem]{Remark}
\numberwithin{theorem}{section}
\numberwithin{equation}{section}
\newcommand{\Ad}{{\rm Ad}}
\newcommand{\ad}{{\rm ad}}
\newcommand{\Ker}{{\rm Ker}\,}
\newcommand{\spa}{{\rm span}\,}
\newcommand{\RR}{{\mathbb R}}
\newcommand{\Bc}{{\mathcal B}}
\newcommand{\Hc}{{\mathcal H}}
\newcommand{\Oc}{{\mathcal O}}
\newcommand{\Pc}{{\mathcal P}}
\newcommand{\Vc}{{\mathcal V}}
\newcommand{\Xc}{{\mathcal X}}
\newcommand{\Zc}{{\mathcal Z}}
\newcommand{\ag}{{\mathfrak a}}
\newcommand{\hg}{{\mathfrak h}}
\newcommand{\mg}{{\mathfrak m}}
\renewcommand{\ng}{{\mathfrak n}}
\newcommand{\sg}{{\mathfrak s}}
\newcommand{\zg}{{\mathfrak z}}
\title[Coadjoint orbits of stepwise square integrable representations]{Coadjoint orbits of stepwise square integrable representations}
\author{Ingrid Belti\c t\u a}
\author{Daniel Belti\c t\u a}
\address{Institute of Mathematics ``Simion Stoilow'' of the Romanian Academy,
P.O. Box 1-764, Bucharest, Romania}
\email{Ingrid.Beltita@imar.ro, ingrid.beltita@gmail.com}
\email{Daniel.Beltita@imar.ro, beltita@gmail.com}
\keywords{square integrable representation; semidirect product; nilpotent Lie group}
\subjclass[2010]{Primary 22E45; Secondary 17B08, 17B30}
\begin{document}

\parskip=5pt

\begin{abstract}
Nilpotent Lie groups with 
stepwise square integrable representations 
were recently investigated by J.A.~Wolf. 
We give an alternative approach to these representations by relating them 
to the stratifications of the duals of nilpotent Lie algebras, 
thus proving that they correspond to a subset with relative Hausdorff topology, dense interior, 
and total Plancherel measure 
in the unitary dual of the Lie group under consideration. 
\end{abstract}

\maketitle

\section{Introduction}
\label{}

Nilpotent Lie groups with 
stepwise square integrable representations 
were recently investigated by J.A.~Wolf \cite{Wo13a}. 
We give an alternative approach to these representations, 
using the method of coadjoint orbits and thus relating them to the 
stratifications of duals of nilpotent Lie algebras 
\cite{Pd89}. 
This directly leads to basic properties of stepwise square integrable representations 
like the fact that they correspond 
to 
a subset with relative Hausdorff topology, dense interior, and total Plancherel measure  
in the unitary dual of the Lie group under consideration 
(Corollary~\ref{main4}).  
This also fits well with the name ``principal series'' that J. Dixmier \cite{Di59} earlier used for them 
in the special case of the strictly upper triangular groups.  

As proved in \cite{Wo13a}, examples of nilpotent Lie groups we consider include 
nilradicals of minimal Borel subgroups of semisimple Lie groups 
(Example 1 below). 
We point out here a completely different class of examples, 
namely all 3-step nilpotent Lie groups with 1-dimensional centers 
(see Example 2).  

\noindent {\bf Preliminaries}. 
Throughout this note, by nilpotent Lie group we actually mean a connected, simply connected, nilpotent Lie group. 
We denote Lie algebras by lower case Gothic letters and their corresponding Lie groups by upper case Roman letters. 
All nilpotent Lie algebras are assumed real and finite dimensional. 
The semidirect product notation $\widetilde{\ng}=\mg\ltimes\ng$ means that 
$\widetilde{\ng}$ is a Lie algebra with a vector direct sum decomposition $\widetilde{\ng}=\mg\dotplus\ng$, 
where $\ng$ is an ideal of $\widetilde{\ng}$ (that is, $[\widetilde{\ng},\ng]\subseteq\ng$) 
and $\mg$ is a subalgebra of $\widetilde{\ng}$.  
Similar notation is used for groups. 

Let $\ng$ be any nilpotent Lie algebra of dimension $m\ge 1$ 
with its coresponding nilpotent Lie group $N=(\ng,\cdot)$ 
whose multiplication is defined by the Baker-Campbell-Hausdorff fomula. 
Select any Jordan-H\"older basis 
$\{X_1,\dots,X_m\}\subset\ng$.   
So for $j=1,\dots,m$ if we define $\ng_j:=\spa\{X_1,\dots,X_j\}$ then $[\ng,\ng_j]\subseteq\ng_{j-1}$, 
where $\ng_0:=\{0\}$. 
A linear subspace $\Vc\subseteq\ng$ is called compatible with the above Jordan-H\"older basis 
if $\Vc\cap\{X_1,\dots,X_m\}$ is a basis in $\Vc$.  

Let $\ng^*$ be the dual space of $\ng$,  
with the duality pairing $\langle\cdot,\cdot\rangle\colon\ng^*\times\ng\to\RR$. 
The $N$-coadjoint action is $\Ad^*_N\colon N\times\ng^*\to\ng^*$, 
$(x,\xi)\mapsto\xi\circ\exp(-\ad_{\ng}x)$, 
where  $\ad_{\ng}x:=[x,\cdot]\colon\ng\to\ng$. 
The coadjoint isotropy algebra at any $\xi_0\in\ng^*$ is 
$\ng(\xi_0):=
\{x\in\ng\mid[x,\ng]\subseteq\Ker\xi_0\}$ and the  
jump set 
$J(\xi_0):=\{j\mid X_j\not\in\ng_{j-1}+\ng(\xi_0)\}$ depends only on the coadjoint orbit~$\Oc:=\Ad^*_N(N)\xi_0\subseteq\ng^*$,  
so we may denote $e(\Oc):=J(\xi_0)$. 
If one defines $\ng_e:=\spa\{X_j\mid j\in e\}$,  
one has the vector direct sum  decomposition $\ng=\ng(\xi_0)\dotplus\ng_e$ and the mapping $\Oc\to\ng_e^*$, $\xi\mapsto\xi\vert_{\ng_e}$, 
is a diffeomorphism \cite[Lemma 1.6.1]{Pd89}. 

For any integer $m\ge 1$ denote by $\Pc(m)$ the set of all subsets of $\{1,\dots,m\}$. 
For $e_1,e_2\in\Pc(m)$ one defines 
$e_1\prec e_2\iff \min(e_1\setminus e_2)<\min(e_2\setminus e_1)$, 
where $\min\emptyset:=\infty$. 
Then $\prec$ is a total ordering on $\Pc(m)$, 
for which the largest subset is $\emptyset$ and the smallest subset is $\{1,\dots,m\}$. 
For any nilpotent Lie algebra $\ng$ with a fixed Jordan-H\"older series,  
$e(\ng)$ denotes the $\prec$-minimum of all jump sets of coadjoint orbits of~$\ng$. 

\begin{proposition}[{\cite[Th. 2.2.7]{Pd94},\cite[Prop. 2.8]{BB11}}]\label{orthog}
With the above notation, consider any unitary irreducible representation $\pi\colon N\to\Bc(\Hc)$  
associated with the coadjoint orbit~$\Oc$. 
For arbitrary 
$f,\phi\in\Hc$     
one has  
$$\Vert(f\mid\pi(\cdot)\phi)\Vert^2_{L^2(\ng_e)} 
=C_e\Vert f\Vert^2_{\Hc}\cdot\Vert\phi\Vert^2_{\Hc}, $$ 
where $C_e:=(2\pi)^{(\dim\Oc)/2}/\vert{\rm Pf}_e(\xi)\vert$ depends only on the jump set~$e$.  
\end{proposition}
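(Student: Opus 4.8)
The plan is to reduce the assertion to an orthogonality (Moyal-type) relation governed by the symplectic structure that the coadjoint orbit induces on the transversal $\ng_e$. The starting observation is that $\ng(\xi_0)$ is exactly the radical of the skew form $B_{\xi_0}(X,Y):=\langle\xi_0,[X,Y]\rangle$ on $\ng$, so that under the decomposition $\ng=\ng(\xi_0)\dotplus\ng_e$ of \cite[Lemma 1.6.1]{Pd89} the restriction $B_{\xi_0}\vert_{\ng_e}$ is a symplectic form. In particular $|e|=\dim\ng_e=\dim\Oc=:2d$ is even, and ${\rm Pf}_e(\xi_0)$ is the Pfaffian of the matrix $(\langle\xi_0,[X_i,X_j]\rangle)_{i,j\in e}$. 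The factor $(2\pi)^{(\dim\Oc)/2}=(2\pi)^{d}$ and the Pfaffian $|{\rm Pf}_e(\xi_0)|$ will then appear respectively as the Fourier normalization on a $d$-dimensional configuration space and as the symplectic volume density of $\ng_e$.

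First I would recast the problem in operator form. For fixed $\phi\in\Hc$ the map $(f,g)\mapsto\int_{\ng_e}(f\mid\pi(\exp Y)\phi)\,\overline{(g\mid\pi(\exp Y)\phi)}\,{\rm d}Y$ is a (nonnegative) sesquilinear form, so it is represented by a positive operator $\Theta_\phi$ via $(\Theta_\phi f\mid g)$, and the left-hand side of the proposition is $(\Theta_\phi f\mid f)$. The key structural input is that, along the isotropy directions $\ng(\xi_0)$, the representation $\pi$ acts by the unitary character determined by $\xi_0$, so that the matrix coefficient $Y\mapsto(f\mid\pi(\exp Y)\phi)$ genuinely varies only transversally, i.e.\ along $\exp\ng_e$. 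Consequently the averaging operator $\Theta_\phi$ intertwines $\pi$ with itself: left translation of the argument $\exp Y$ by an element of $N$ can be absorbed, modulo the scalar action on $\ng(\xi_0)$ and a unipotent change of the $\ng_e$-coordinate whose Jacobian is $1$. By irreducibility of $\pi$ and Schur's lemma, $\Theta_\phi$ is a scalar, $\Theta_\phi=\tau(\phi)I$; positivity and the quadratic dependence on $\phi$ then force $\tau(\phi)=C\,\Vert\phi\Vert_\Hc^2$, which already yields the factorization $\Vert(f\mid\pi(\cdot)\phi)\Vert^2_{L^2(\ng_e)}=C\,\Vert f\Vert_\Hc^2\,\Vert\phi\Vert_\Hc^2$ with a constant $C$ independent of $f$ and $\phi$.

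It then remains to evaluate the scalar $C$ and to confirm that it depends only on $e$. For the value I would test the identity on a convenient vector and use the Kirillov character formula: writing $|(f\mid\pi(\exp Y)\phi)|^2$ as the trace of a rank-one operator composed with $\pi(\exp Y)$ and integrating, the computation collapses to a Gaussian/symplectic Fourier integral over $\ng_e$ whose value is precisely $(2\pi)^{d}/|{\rm Pf}_e(\xi_0)|$, matching the normalization of the Liouville measure on $\Oc$ under the diffeomorphism $\Oc\to\ng_e^*$, $\xi\mapsto\xi\vert_{\ng_e}$, of \cite[Lemma 1.6.1]{Pd89}. For well-definedness one checks that $|{\rm Pf}_e(\xi)|$ is constant along $\Oc$: this follows from the invariance of the Liouville volume of $\Oc$ under $\Ad^*_N(N)$ together with the fact that the coordinate map intertwines the coadjoint action with volume-preserving unipotent affine maps of $\ng_e^*$, so the density $|{\rm Pf}_e|$ of the pushforward relative to Lebesgue measure cannot vary. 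Hence $C=C_e$ may be read off at any representative $\xi\in\Oc$.

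The main obstacle I anticipate is not the Schur-type factorization but the precise bookkeeping of the constant. One must justify the relevant trace-class or Hilbert--Schmidt estimates so that the character formula and Fubini are legitimate, match the Haar measure on $N$ with Lebesgue measure on $\ng_e$ through the Baker--Campbell--Hausdorff coordinates (verifying that the BCH correction terms contribute only a modulus-one factor to the matrix coefficient and hence drop out of $|(f\mid\pi(\exp Y)\phi)|^2$), and track consistently the several factors of $2\pi$ distinguishing the Fourier, Plancherel, and symplectic normalizations. Getting these normalizations to combine exactly into $(2\pi)^{(\dim\Oc)/2}/|{\rm Pf}_e(\xi)|$, rather than into that expression times an unwanted power of $2\pi$, is the least routine part of the argument.
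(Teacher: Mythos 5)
The paper does not prove this proposition; it is imported verbatim from \cite{Pd94} and \cite{BB11}, so your argument has to stand on its own. It does not: the decisive step --- that $\pi$ acts by the unitary character of $\xi_0$ along the isotropy directions $\ng(\xi_0)$, so that the matrix coefficient ``varies only transversally'' and the averaging operator $\Theta_\phi$ intertwines $\pi$ with itself --- is false in general. The subspace on which $\de\pi$ is scalar is $\bigcap_{\xi\in\Oc}\ng(\xi)$, which is usually strictly smaller than $\ng(\xi_0)$. Concretely, take the $4$-dimensional filiform algebra with $[X_4,X_3]=X_2$, $[X_4,X_2]=X_1$ and $\xi_0=X_1^*$: then $\ng(\xi_0)=\spa\{X_1,X_3\}$ and $e=\{2,4\}$, but in the realization of $\pi$ induced from the abelian polarization $\spa\{X_1,X_2,X_3\}$ one computes $(\pi(\exp(sX_3))f)(t)=\ee^{\ie st^2/2}f(t)$, which is not scalar, and $(f\mid\pi(\exp(sX_3))\phi)$ genuinely depends on $s$. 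Hence $|(f\mid\pi(\cdot)\phi)|$ does not descend to $N/N(\xi_0)$ (which is not even a group, $N(\xi_0)$ being non-normal), a left translate of $\exp Y$ with $Y\in\ng_e$ cannot be rewritten as an element of $\exp\ng_e$ times something acting by scalars, and Schur's lemma cannot be applied to $\Theta_\phi$ as you define it. This is precisely where the Moore--Wolf argument \cite{MW73}, which does work in the flat case $\ng(\xi_0)=\Zc(\ng)$ (and which the paper invokes only in that case, in the paragraph on generic flat coadjoint orbits), stops generalizing; establishing the factorization over the merely linear cross-section $\ng_e$ is the actual content of Pedersen's theorem, whose proof proceeds in an explicit induced realization adapted to the Jordan--H\"older basis, by induction on the jump indices.

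Your remaining points --- evaluation of the constant via a symplectic Fourier integral, and the constancy of $|{\rm Pf}_e(\xi)|$ along $\Oc$ --- are reasonable but moot until the factorization itself is obtained by other means; and even granting Schur's lemma, the assertion that positivity plus ``quadratic dependence'' forces $\tau(\phi)=C\Vert\phi\Vert_{\Hc}^2$ needs the standard symmetry argument exchanging $f$ and $\phi$ via $Y\mapsto -Y$, which you should at least record.
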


One says in this paper that a nilpotent Lie algebra $\ng$ has \emph{generic flat coadjoint orbits} if there exists $\xi\in\ng^*$ with $\ng(\xi)=\Zc(\ng):=\{x\in\ng\mid[x,\ng]=\{0\}\}$ 
(the center of $\ng$). 
If this the case, then the set of all $\xi\in\ng^*$ with $\ng(\xi)=\Zc(\ng)$ is an $\Ad^*_N$-invariant Zariski-dense open subset of $\ng^*$ 
and the coefficients of the unitary irreducible representations of $N$ associated with $N$-coadjoint orbits contained in that set are square integrable modulo the center of $N$; see \cite{MW73} and \cite{CG90} for more details. 

\section{Main results}

\begin{proposition}\label{grad}
If $\widetilde{\ng}=\mg\ltimes\ng$ 
and $\widetilde{\xi}\in\widetilde{\ng}^*$ with $[\mg,\ng]\subseteq\Ker\widetilde{\xi}$, 
then 
$$\widetilde{\ng}(\widetilde{\xi})=\mg(\widetilde{\xi}\vert_{\mg})\dotplus\ng(\widetilde{\xi}\vert_{\ng}).$$ 
\end{proposition}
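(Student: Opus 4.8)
The plan is to prove the asserted identity by double inclusion, after first unwinding the two isotropy algebras on the right-hand side. Since $\mg$ is a subalgebra and $\ng$ is an ideal, for $y\in\mg$ and $z\in\ng$ one has $[y,\mg]\subseteq\mg$ and $[z,\ng]\subseteq\ng$; hence the condition $[y,\mg]\subseteq\Ker(\widetilde{\xi}\vert_{\mg})$ is equivalent to $[y,\mg]\subseteq\Ker\widetilde{\xi}$, and likewise $\ng(\widetilde{\xi}\vert_{\ng})=\{z\in\ng\mid[z,\ng]\subseteq\Ker\widetilde{\xi}\}$. Thus every computation reduces to deciding whether various brackets lie in the single kernel $\Ker\widetilde{\xi}$, and the entire argument hinges on the hypothesis $[\mg,\ng]\subseteq\Ker\widetilde{\xi}$, which annihilates all mixed brackets.

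For the inclusion $\supseteq$, I would take $y\in\mg(\widetilde{\xi}\vert_{\mg})$ and $z\in\ng(\widetilde{\xi}\vert_{\ng})$ and test $y+z$ against an arbitrary $u+v\in\widetilde{\ng}$ with $u\in\mg$, $v\in\ng$. Expanding $[y+z,u+v]$ into the four brackets $[y,u]$, $[y,v]$, $[z,u]$, $[z,v]$, the two diagonal terms lie in $\Ker\widetilde{\xi}$ by the defining conditions on $y$ and $z$, while the two mixed terms $[y,v]$ and $[z,u]$ lie in $[\mg,\ng]\subseteq\Ker\widetilde{\xi}$ by hypothesis; summing gives $[y+z,\widetilde{\ng}]\subseteq\Ker\widetilde{\xi}$, that is, $y+z\in\widetilde{\ng}(\widetilde{\xi})$.

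For the reverse inclusion I would start from an arbitrary $\widetilde{x}\in\widetilde{\ng}(\widetilde{\xi})$ and use the direct sum $\widetilde{\ng}=\mg\dotplus\ng$ to write $\widetilde{x}=y+z$ uniquely with $y\in\mg$, $z\in\ng$. The idea is to recover each component by pairing with elements of a single summand: pairing $\widetilde{x}$ with $v\in\ng$ gives $[\widetilde{x},v]=[y,v]+[z,v]\in\Ker\widetilde{\xi}$, and since $[y,v]\in[\mg,\ng]\subseteq\Ker\widetilde{\xi}$, the remaining term $[z,v]$ lies in $\Ker\widetilde{\xi}$; as $v\in\ng$ was arbitrary this yields $z\in\ng(\widetilde{\xi}\vert_{\ng})$. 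Symmetrically, pairing $\widetilde{x}$ with $u\in\mg$ and discarding the mixed term $[z,u]\in[\mg,\ng]\subseteq\Ker\widetilde{\xi}$ shows $[y,\mg]\subseteq\Ker\widetilde{\xi}$, that is, $y\in\mg(\widetilde{\xi}\vert_{\mg})$. Finally the sum on the right is direct because $\mg(\widetilde{\xi}\vert_{\mg})\subseteq\mg$ and $\ng(\widetilde{\xi}\vert_{\ng})\subseteq\ng$ meet only in $\mg\cap\ng=\{0\}$.

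I expect no serious obstacle here: the computation is routine bracket bookkeeping, and the only point that genuinely requires attention is the correct reading of the restricted isotropy algebras $\mg(\widetilde{\xi}\vert_{\mg})$ and $\ng(\widetilde{\xi}\vert_{\ng})$, namely that testing against $\Ker(\widetilde{\xi}\vert_{\mg})$ and $\Ker(\widetilde{\xi}\vert_{\ng})$ is the same as testing against $\Ker\widetilde{\xi}$, because the relevant brackets already land inside $\mg$, respectively $\ng$. The conceptual heart is simply that the hypothesis $[\mg,\ng]\subseteq\Ker\widetilde{\xi}$ makes the cross brackets invisible to $\widetilde{\xi}$, decoupling the two summands.
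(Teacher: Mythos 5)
Your proof is correct, and it follows essentially the same route as the paper's: both arguments reduce everything to membership of brackets in $\Ker\widetilde{\xi}$, split an arbitrary bracket into diagonal and mixed terms, and use the hypothesis $[\mg,\ng]\subseteq\Ker\widetilde{\xi}$ to discard the mixed ones, proving the two inclusions separately. The paper merely packages the same bookkeeping into a preliminary list of equivalences before invoking the hypothesis, so there is no substantive difference.
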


\begin{proof}
Step 1. 
For arbitrary $\widetilde{\xi}\in\widetilde{\ng}^*$ one has 
$[\widetilde{\ng},\mg(\widetilde{\xi}\vert_{\mg})]
=[\mg,\mg(\widetilde{\xi}\vert_{\mg})]+[\ng,\mg(\widetilde{\xi}\vert_{\mg})]$ 
and the first term in the right-hand side of that equality is contained in $\Ker(\widetilde{\xi}\vert_{\mg})$, 
hence 
\begin{equation}\label{grad_proof_eq1}
\mg(\widetilde{\xi}\vert_{\mg})\subseteq\widetilde{\ng}(\widetilde{\xi})
\iff [\ng,\mg(\widetilde{\xi}\vert_{\mg})]\subseteq\Ker(\widetilde{\xi}\vert_{\ng}).
\end{equation}
Similarly, since 
$[\widetilde{\ng},\ng(\widetilde{\xi}\vert_{\ng})]
=[\mg,\ng(\widetilde{\xi}\vert_{\ng})]+[\ng,\ng(\widetilde{\xi}\vert_{\ng})]$ 
and the second term in the right-hand side of that equality is contained in $\Ker(\widetilde{\xi}\vert_{\ng})$, 
we obtain 
\begin{equation}\label{grad_proof_eq2}
\ng(\widetilde{\xi}\vert_{\ng})\subseteq\widetilde{\ng}(\widetilde{\xi})
\iff [\mg,\ng(\widetilde{\xi}\vert_{\ng})]\subseteq\Ker(\widetilde{\xi}\vert_{\ng}).
\end{equation}
Moreover, for any subset $\Vc\subseteq\widetilde{\ng}$ one has 
$\widetilde{\ng}(\widetilde{\xi})\cap\Vc=\{x\in\Vc\mid [x,\widetilde{\ng}]\subseteq\Ker\widetilde{\xi}\}$, 
so 
\begin{equation}\label{grad_proof_eq3}
\widetilde{\ng}(\widetilde{\xi})\cap\mg\subseteq\mg(\widetilde{\xi}\vert_{\mg})
\text{ and }
\widetilde{\ng}(\widetilde{\xi})\cap\ng\subseteq\ng(\widetilde{\xi}\vert_{\ng}).
\end{equation}
Furthermore, for arbitrary $x\in\widetilde{\ng}$ one has 
\begin{equation}\label{grad_proof_eq4}
x\in\widetilde{\ng}(\widetilde{\xi})\iff [x,\widetilde{\ng}]\subseteq\Ker\widetilde{\xi}\iff 
[x,\mg]\subseteq\Ker\widetilde{\xi} 
\text{ and }[x,\ng]\subseteq\Ker(\widetilde{\xi}\vert_{\ng}). 
\end{equation}

Step 2. 
For proving the proposition, 
we must check that if $[\mg,\ng]\subseteq\Ker\widetilde{\xi}$, then 
$\mg(\widetilde{\xi}\vert_{\mg})\subseteq\widetilde{\ng}(\widetilde{\xi})$ and 
$\ng(\widetilde{\xi}\vert_{\ng})\subseteq\widetilde{\ng}(\widetilde{\xi})$, 
and moreover that 
if $x\in\widetilde{\ng}$, 
$x_{\mg}\in\mg$ and $x_{\ng}\in\ng$ with $x=x_{\mg}+x_{\ng}$, then 
\begin{equation}\label{grad_proof_eq5}
x\in\widetilde{\ng}(\widetilde{\xi})\iff x_{\mg}\in\mg(\widetilde{\xi}\vert_{\mg})
\text{ and }x_{\ng}\in\ng(\widetilde{\xi}\vert_{\ng}).
\end{equation}
Since $[\mg,\ng]\subseteq\ng$, it follows by the hypothesis $[\mg,\ng]\subseteq\Ker\widetilde{\xi}$ 
that actually 
\begin{equation}\label{grad_proof_eq6}
[\mg,\ng]\subseteq\Ker(\widetilde{\xi}\vert_{\ng}).
\end{equation} 
Now, if $x\in\widetilde{\ng}(\widetilde{\xi})$, then we obtain by \eqref{grad_proof_eq4} 
that $[x_{\mg}+x_{\ng},y]\in\Ker(\widetilde{\xi}\vert_{\ng})$ for arbitrary $y\in\ng$. 
Since $[x_{\mg},y]\in[\mg,\ng]\subseteq\Ker(\widetilde{\xi}\vert_{\ng})$ by \eqref{grad_proof_eq6}, 
it then follows that $[x_{\ng},\ng]\subseteq\Ker(\widetilde{\xi}\vert_{\ng})$, 
that is, $x_{\ng}\in\ng(\widetilde{\xi}\vert_{\ng})$. 
This also implies by \eqref{grad_proof_eq6} and \eqref{grad_proof_eq2} 
that $x_{\ng}\in \widetilde{\ng}(\widetilde{\xi})$, 
hence $x_{\mg}=x-x_{\ng}\in \widetilde{\ng}(\widetilde{\xi})$. 
But thus $x_{\mg}\in \widetilde{\ng}(\widetilde{\xi})\cap\mg\subseteq \mg(\widetilde{\xi}\vert_{\mg})$ 
by \eqref{grad_proof_eq3}. 
This completes the proof of the implication ``$\Rightarrow$'' in \eqref{grad_proof_eq5}. 

For the converse implication ``$\Leftarrow$'' in \eqref{grad_proof_eq5}, note that 
$\mg(\widetilde{\xi}\vert_{\mg})\subseteq\widetilde{\ng}(\widetilde{\xi})$ and 
$\ng(\widetilde{\xi}\vert_{\ng})\subseteq\widetilde{\ng}(\widetilde{\xi})$ 
by \eqref{grad_proof_eq6}, \eqref{grad_proof_eq1}, and \eqref{grad_proof_eq2},  
and therefore $x_{\mg},x_{\ng}\in \widetilde{\ng}(\widetilde{\xi})$. 
Since $x=x_{\mg}+x_{\ng}$, it follows that $x\in \widetilde{\ng}(\widetilde{\xi})$, 
and we are done. 
\end{proof}

\begin{remark}\label{P1}
\normalfont 
To point out the representation theoretic significance of the condition from Proposition~\ref{grad}, 
assume there that $\widetilde{\ng}$ is a nilpotent Lie algebra. 
Any unitary irreducible representation $\pi\colon N\to\Bc(\Hc)$ associated with the $N$-coadjoint orbit of $\xi$  
extends to 
a unitary irreducible representation $\widetilde{\pi}\colon\widetilde{N}\to\Bc(\Hc)$ with $\widetilde{\pi}\vert_N=\pi$. 
One can take the representation $\widetilde{\pi}$ associated with the $\widetilde{N}$-coadjoint orbit of $\widetilde{\xi}\in\widetilde{\ng}^*$, 
where $\widetilde{\xi}\vert_\mg=0$ and $\widetilde{\xi}\vert_\ng=\xi$. 
 In fact, one has $\mg\subseteq\widetilde{\ng}(\widetilde{\xi})$, 
hence  the above assertions follow as a very special case of the results of \cite{Du72} 
(see also \cite[Ths. 2.5.1(b), 2.5.3(b)]{CG90}). 
\end{remark}

\begin{remark}\label{SQ}
\normalfont 
One can give an alternative  proof of the square integrability property of stepwise square integrable representations $\pi_\lambda$ from \cite[Th. 3.6]{Wo13a}, by using the above Proposition~\ref{grad} for computing the coadjoint isotropy algebra 
at $\lambda\in\ng^*$ and then applying Proposition~\ref{orthog}. 
\end{remark}

\begin{proposition}\label{jump}
Let $\widetilde{\ng}=\mg\ltimes\ng$, 
where $\widetilde{\ng}$ is a nilpotent Lie algebra. 
Let 
$$\{0\}=\widetilde{\ng}_0\subseteq\widetilde{\ng}_1\subseteq\cdots\subseteq\widetilde{\ng}_m=\widetilde{\ng}$$
be any Jordan-H\"older series with $\widetilde{\ng}_k=\ng$, where $k=\dim\ng$. 
If $\widetilde{\xi}\in\widetilde{\ng}^*$ and  $\widetilde{\ng}(\widetilde{\xi})=\mg(\widetilde{\xi}\vert_{\mg})\dotplus\ng(\widetilde{\xi}\vert_{\ng})$, 
then 
\begin{equation}\label{jump_eq1}
J(\widetilde{\xi})=J(\widetilde{\xi}\vert_\ng)\sqcup (k+J(\widetilde{\xi}\vert_\mg)),
\end{equation} 
where the jump sets 
$J(\widetilde{\xi}\vert_\ng)\subseteq\{1,\dots,k\}$ 
and $J(\widetilde{\xi}\vert_\mg)\subseteq\{1,\dots,m-k\}$ are computed for the Jordan-H\"older series 
$\{0\}=\widetilde{\ng}_0\subseteq\widetilde{\ng}_1\subseteq\cdots\subseteq\widetilde{\ng}_k=\ng$ 
and 
$\{0\}=\widetilde{\ng}_k/\ng\subseteq\widetilde{\ng}_{k+1}/\ng 
\subseteq\cdots\subseteq\widetilde{\ng}_m/\ng=\widetilde{\ng}/\ng\simeq\mg$, 
respectively. 
\end{proposition}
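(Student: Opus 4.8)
The plan is to unwind the definition of the jump set directly, using the decomposition of the isotropy algebra as the main tool. Recall that $J(\widetilde\xi)=\{j\mid X_j\notin\widetilde\ng_{j-1}+\widetilde\ng(\widetilde\xi)\}$, where $\{X_1,\dots,X_m\}$ is the Jordan--H\"older basis underlying the given series, so that $\{X_1,\dots,X_k\}$ is a basis of $\ng$ and $\{X_{k+1},\dots,X_m\}$ projects to a Jordan--H\"older basis of $\widetilde\ng/\ng\simeq\mg$. The hypothesis $\widetilde\ng(\widetilde\xi)=\mg(\widetilde\xi\vert_\mg)\dotplus\ng(\widetilde\xi\vert_\ng)$ is exactly the conclusion of Proposition~\ref{grad}, so I may take that splitting for granted. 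The claim \eqref{jump_eq1} amounts to two separate statements: for $1\le j\le k$ the index $j$ belongs to $J(\widetilde\xi)$ iff it belongs to $J(\widetilde\xi\vert_\ng)$, and for $k+1\le j\le m$ the index $j$ belongs to $J(\widetilde\xi)$ iff $j-k$ belongs to $J(\widetilde\xi\vert_\mg)$. I would prove these two ranges in turn.

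For the first range ($1\le j\le k$), the point is that everything takes place inside the ideal $\ng$. Since $\widetilde\ng_{j-1}\subseteq\widetilde\ng_k=\ng$ for such $j$, and since $X_j\in\ng$, the condition $X_j\in\widetilde\ng_{j-1}+\widetilde\ng(\widetilde\xi)$ can be intersected with $\ng$. Here I would use the splitting: because $\widetilde\ng(\widetilde\xi)=\mg(\widetilde\xi\vert_\mg)\dotplus\ng(\widetilde\xi\vert_\ng)$ and $\mg(\widetilde\xi\vert_\mg)\cap\ng=\{0\}$ (as $\mg\cap\ng=\{0\}$), writing any element of $\widetilde\ng(\widetilde\xi)$ as $a+b$ with $a\in\mg(\widetilde\xi\vert_\mg)$, $b\in\ng(\widetilde\xi\vert_\ng)$ and projecting modulo $\ng$ shows that $(\widetilde\ng_{j-1}+\widetilde\ng(\widetilde\xi))\cap\ng=\widetilde\ng_{j-1}+\ng(\widetilde\xi\vert_\ng)$ for $j\le k$. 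Hence $X_j\in\widetilde\ng_{j-1}+\widetilde\ng(\widetilde\xi)$ iff $X_j\in\widetilde\ng_{j-1}+\ng(\widetilde\xi\vert_\ng)$, which is precisely the membership condition defining $J(\widetilde\xi\vert_\ng)$ for the series $\{0\}\subseteq\widetilde\ng_1\subseteq\cdots\subseteq\widetilde\ng_k=\ng$. This gives $J(\widetilde\xi)\cap\{1,\dots,k\}=J(\widetilde\xi\vert_\ng)$.

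For the second range ($k+1\le j\le m$), I would pass to the quotient $\widetilde\ng/\ng\simeq\mg$ via the canonical projection $q\colon\widetilde\ng\to\widetilde\ng/\ng$. Now $\widetilde\ng_{j-1}\supseteq\ng$, so the condition $X_j\in\widetilde\ng_{j-1}+\widetilde\ng(\widetilde\xi)$ is equivalent, after applying $q$, to $q(X_j)\in q(\widetilde\ng_{j-1})+q(\widetilde\ng(\widetilde\xi))$. The splitting again does the work: $q(\widetilde\ng(\widetilde\xi))=q(\mg(\widetilde\xi\vert_\mg))$ since $q$ kills the $\ng(\widetilde\xi\vert_\ng)$-part, and $q$ identifies $\mg(\widetilde\xi\vert_\mg)$ with the coadjoint isotropy algebra of $\widetilde\xi\vert_\mg$ computed in $\widetilde\ng/\ng\simeq\mg$; here I would note that $q$ restricts to a Lie algebra isomorphism $\mg\xrightarrow{\sim}\widetilde\ng/\ng$ carrying the basis $\{X_{k+1},\dots,X_m\}$ to the Jordan--H\"older basis of the quotient series, and carrying $\widetilde\xi\vert_\mg$ to the functional on the quotient. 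Since $\widetilde\ng_{j-1}/\ng$ is the $(j-1-k)$-th step of the quotient series and $q(X_j)$ is its $(j-k)$-th basis vector, the condition becomes exactly the membership defining $J(\widetilde\xi\vert_\mg)$ at index $j-k$. This yields $J(\widetilde\xi)\cap\{k+1,\dots,m\}=k+J(\widetilde\xi\vert_\mg)$, and combining the two ranges gives the disjoint union \eqref{jump_eq1}.

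The only delicate point, and the step I expect to require the most care, is the intersection identity $(\widetilde\ng_{j-1}+\widetilde\ng(\widetilde\xi))\cap\ng=\widetilde\ng_{j-1}+\ng(\widetilde\xi\vert_\ng)$ in the first range: one needs the direct-sum nature of the splitting together with $\widetilde\ng_{j-1}\subseteq\ng$ to discard the $\mg$-component cleanly, rather than merely the inclusion $\ng(\widetilde\xi\vert_\ng)\subseteq\widetilde\ng(\widetilde\xi)$, which by itself would only give one direction. Everything else is a routine transfer of the defining condition across the inclusion $\ng\hookrightarrow\widetilde\ng$ or the projection $q$, made possible precisely because the hypothesis guarantees $\widetilde\ng(\widetilde\xi)$ respects the decomposition $\widetilde\ng=\mg\dotplus\ng$.
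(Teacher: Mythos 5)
Your proposal is correct and follows essentially the same route as the paper's proof: both split into the ranges $1\le j\le k$ and $k+1\le j\le m$, use the directness of the sum $\mg(\widetilde{\xi}\vert_{\mg})\dotplus\ng(\widetilde{\xi}\vert_{\ng})$ together with $\mg\cap\ng=\{0\}$ to discard the $\mg$-component in the first range, and pass to the quotient modulo $\ng$ (where $\ng(\widetilde{\xi}\vert_{\ng})\subseteq\ng\subseteq\widetilde{\ng}_{j-1}$ is absorbed) in the second. The only difference is cosmetic: you phrase the jump condition as membership $X_j\in\widetilde{\ng}_{j-1}+\widetilde{\ng}(\widetilde{\xi})$ while the paper phrases it as strict inclusion of the subspaces $\widetilde{\ng}(\widetilde{\xi})+\widetilde{\ng}_{j-1}\subsetneqq\widetilde{\ng}(\widetilde{\xi})+\widetilde{\ng}_j$, and your intersection identity $(\widetilde{\ng}_{j-1}+\widetilde{\ng}(\widetilde{\xi}))\cap\ng=\widetilde{\ng}_{j-1}+\ng(\widetilde{\xi}\vert_{\ng})$ plays exactly the role of the paper's observation that $\mg(\widetilde{\xi}\vert_{\mg})\cap(\ng(\widetilde{\xi}\vert_{\ng})+\widetilde{\ng}_{j-1})=\{0\}$.
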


\begin{proof}
Since $\widetilde{\ng}(\widetilde{\xi})=\mg(\widetilde{\xi}\vert_{\mg})\dotplus\ng(\widetilde{\xi}\vert_{\ng})$, 
one has for $j\in\{1,\dots,m\}$,
$$j\in J(\widetilde{\xi})\iff 
\mg(\widetilde{\xi}\vert_{\mg})+\ng(\widetilde{\xi}\vert_{\ng})+\widetilde{\ng}_{j-1}
\subsetneqq
\mg(\widetilde{\xi}\vert_{\mg})+\ng(\widetilde{\xi}\vert_{\ng})+\widetilde{\ng}_j.$$
If $k+1\le j\le m$, then $\ng(\widetilde{\xi}\vert_{\ng})\subseteq\ng=\widetilde{\ng}_k\subseteq\widetilde{\ng}_{j-1}\subseteq\widetilde{\ng}_j$, 
hence 
$$\begin{aligned}
j\in J(\widetilde{\xi})
& 
\iff \mg(\widetilde{\xi}\vert_{\mg})+\widetilde{\ng}_{j-1}
\subsetneqq
\mg(\widetilde{\xi}\vert_{\mg})+\widetilde{\ng}_j \\
& 
\iff (\mg(\widetilde{\xi}\vert_{\mg})+\widetilde{\ng}_{j-1})/\ng
\subsetneqq
(\mg(\widetilde{\xi}\vert_{\mg})+\widetilde{\ng}_j)/\ng \\
&
\iff j-k\in J(\widetilde{\xi}\vert_\mg).
\end{aligned}
$$
If $1\le j\le k$, then $\widetilde{\ng}_{j-1}\subseteq\widetilde{\ng}_j\subseteq\ng$ and $\ng\cap\mg=\{0\}$, hence 
$$\mg(\widetilde{\xi}\vert_{\mg})\cap(\ng(\widetilde{\xi}\vert_{\ng})+\widetilde{\ng}_{j-1})
=
\mg(\widetilde{\xi}\vert_{\mg})\cap(\ng(\widetilde{\xi}\vert_{\ng})+\widetilde{\ng}_j)=\{0\}, $$ 
and this implies 
$$\mg(\widetilde{\xi}\vert_{\mg})+\ng(\widetilde{\xi}\vert_{\ng})+\widetilde{\ng}_{j-1}
\subsetneqq
\mg(\widetilde{\xi}\vert_{\mg})+\ng(\widetilde{\xi}\vert_{\ng})+\widetilde{\ng}_j
\iff \ng(\widetilde{\xi}\vert_{\ng})+\widetilde{\ng}_{j-1}
\subsetneqq
\ng(\widetilde{\xi}\vert_{\ng})+\widetilde{\ng}_j.$$
Therefore 
$j\in J(\widetilde{\xi})\iff 
\ng(\widetilde{\xi}\vert_{\ng})+\widetilde{\ng}_{j-1}
\subsetneqq
\ng(\widetilde{\xi}\vert_{\ng})+\widetilde{\ng}_j 
\iff 
j\in J(\widetilde{\xi}\vert_\ng)$, and 
this completes the proof. 
\end{proof}

\begin{lemma}\label{obv}
Let $\hg$ be any nilpotent Lie algebra with a fixed Jordan-H\"older series, 
and denote by $\zg$ the center of $\hg$. 
If $\dim\zg=1<\dim\hg$ 
and $\Oc\subseteq\hg^*$ is any coadjoint orbit with $e(\Oc)=e(\hg)$, 
then $\langle\Oc,\zg\rangle\ne\{0\}$. 
\end{lemma}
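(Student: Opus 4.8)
The plan is to show that the whole coadjoint orbit avoids the hyperplane $\zg^\perp=\{\xi\in\hg^*\mid\langle\xi,X_1\rangle=0\}$, where $\zg=\RR X_1$ (the first vector of any Jordan--H\"older basis is central, and $\dim\zg=1$). Since $X_1$ is central, $\langle\cdot,X_1\rangle$ is constant along each coadjoint orbit, so it suffices to produce one $\xi_0\in\Oc$ with $\langle\xi_0,X_1\rangle\ne0$. Writing $e:=e(\hg)$, I would reformulate this as: the stratum $\{\xi\mid J(\xi)=e\}$, which contains $\Oc$ because $e(\Oc)=e$, is disjoint from $\zg^\perp$.

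First I would record two facts about $e$. Since $X_1$ is central, the first row of the structure matrix $A(\xi)=(\langle\xi,[X_i,X_j]\rangle)_{i,j}$ vanishes identically, so $1\notin e$; on the other hand, for any $\xi$ with $\langle\xi,X_1\rangle\ne0$ the second row is nonzero, because $X_2\notin\zg$ forces $[X_2,\hg]=\RR X_1$ via the flag relation $[\hg,\hg_2]\subseteq\hg_1=\RR X_1$, whence that row equals $\langle\xi,X_1\rangle\,(c^1_{2l})_l\ne0$ with $c^1_{2l}$ the $X_1$-coefficient of $[X_2,X_l]$. As the stratum of $e$ is Zariski-dense \cite{Pd89}, it meets $\{\langle\cdot,X_1\rangle\ne0\}$, and at such a point $2$ is a jump index; hence $2\in e$.

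The heart of the argument is the Pfaffian ${\rm Pf}_e$ of Proposition~\ref{orthog}. On the stratum of $e$ one has $\hg=\hg(\xi)\dotplus\hg_e$ with $\hg_e=\spa\{X_j\mid j\in e\}$, and since $\hg(\xi)$ is the radical of $B_\xi(x,y):=\langle\xi,[x,y]\rangle$, this direct sum forces $B_\xi|_{\hg_e}$ to be nondegenerate, i.e. ${\rm Pf}_e(\xi)\ne0$; thus the stratum is contained in $\{{\rm Pf}_e\ne0\}$. Now I would examine the column of the skew-symmetric matrix $(\langle\xi,[X_i,X_j]\rangle)_{i,j\in e}$ indexed by $2\in e$: for every $i\in e$ the relation $[\hg,\hg_2]\subseteq\hg_1=\RR X_1$ gives $[X_i,X_2]\in\RR X_1$, so each entry of that column is a fixed multiple of $\langle\xi,X_1\rangle$. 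Hence on $\zg^\perp$ the column vanishes, the matrix is singular, and ${\rm Pf}_e$ vanishes identically on the hyperplane $\zg^\perp$. Since $\langle\cdot,X_1\rangle$ is an irreducible (degree one) polynomial, it divides ${\rm Pf}_e$, so $\{{\rm Pf}_e\ne0\}\subseteq\{\langle\cdot,X_1\rangle\ne0\}$. Combining the inclusions, $\Oc\subseteq\{J=e\}\subseteq\{{\rm Pf}_e\ne0\}\subseteq\{\langle\cdot,X_1\rangle\ne0\}$, which is exactly $\langle\Oc,\zg\rangle\ne\{0\}$.

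The points needing care, and where I expect the main obstacle to lie, are the two structural inputs from Pedersen's stratification: that the $\prec$-minimal jump set $e(\hg)$ is the one realized on a Zariski-dense open stratum (used both to place a point with $\langle\cdot,X_1\rangle\ne0$ inside that stratum and thereby to get $2\in e$), and that $J(\xi)=e$ yields $\hg=\hg(\xi)\dotplus\hg_e$ with $B_\xi|_{\hg_e}$ nondegenerate. Both are available from \cite{Pd89} in the form quoted before Proposition~\ref{orthog}; once they are in hand, the divisibility ${\rm Pf}_e=\langle\cdot,X_1\rangle\cdot Q$ furnished by the vanishing column indexed by $2$ is the decisive and elementary step.
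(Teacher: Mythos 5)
Your proof is correct, but it takes a genuinely different route from the paper's. The paper argues purely with the ordering $\prec$ on jump sets: for any $\xi_0$ with $\zg\not\subseteq\Ker\xi_0$ it shows $2\in J(\xi_0)$ and $1\notin J(\xi_0)$ (citing \cite[Th.\ 3.1(2)]{BB13} to produce $X_2\in\hg_2$ with $[X_2,\hg]\not\subseteq\Ker\xi_0$), while for any $\xi$ with $\langle\xi,\zg\rangle=\{0\}$ it shows $\hg_2\subseteq\hg(\xi)$, so $2\notin J(\xi)$; hence $J(\xi_0)\prec J(\xi)$ and an orbit annihilating $\zg$ cannot realize the minimum $e(\hg)$. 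This is shorter and needs neither the Pfaffian nor any genericity statement. You instead route the argument through ${\rm Pf}_e$: you get $2\in e$ from the Zariski-density of the minimal layer, and then show that the layer $\{J=e\}$ sits inside $\{{\rm Pf}_e\neq 0\}$ (via $\hg=\hg(\xi)\dotplus\hg_e$ from \cite[Lemma 1.6.1]{Pd89}) while ${\rm Pf}_e$ vanishes on $\zg^\perp$ because the column indexed by $2$ is a fixed vector times $\langle\cdot,X_1\rangle$. Both proofs hinge on the same structural fact, $[X_2,\hg]=\RR X_1$; your elementary derivation of it from $\dim\zg=1$ (if $X_2$ were central then $\dim\zg\geq 2$, and $[X_2,\hg]\subseteq\hg_1=\zg$) is sound and in fact shows the \cite{BB13} citation is not strictly needed here. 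The two extra inputs you import from Pedersen (density of the minimal layer; the direct-sum decomposition forcing ${\rm Pf}_e\neq0$ on it) are both quoted or used elsewhere in the paper, so they are legitimate, and your divisibility step ${\rm Pf}_e=\langle\cdot,X_1\rangle\cdot Q$ is an unnecessary detour --- the contrapositive ``$\langle\xi,X_1\rangle=0\Rightarrow{\rm Pf}_e(\xi)=0$'' already gives the inclusion you need. What your approach buys is a slightly stronger-looking statement (the whole generic layer, not just the given orbit, avoids $\zg^\perp$) and an explicit link to the Pfaffian of Proposition~\ref{orthog}; what the paper's approach buys is brevity and independence from the stratification machinery.
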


\begin{proof}
 Let $\{0\}=\hg_0\subseteq\hg_1\subseteq\cdots\subseteq\hg_m=\hg$ be the fixed Jordan-H\"older series. 
Since $\hg_1\subseteq\zg$, and $\dim\zg=1$, it follows that $\hg_1=\zg$. 
If $\xi_0\in\hg^*$ is any functional with $\zg\not\subset\Ker\xi_0$, 
then by using \cite[Th. 3.1(2)]{BB13} one can find $X_2\in\hg_2$ with $[X_2,\hg]\not\subset\Ker\xi_0$, 
hence $\hg_2\not\subset\hg(\xi_0)=\hg(\xi_0)+\hg_1$. 
Therefore $2\in J(\xi_0)$ and $1\not\in J(\xi_0)$. 

On the other hand, if $\Oc\subseteq\hg^*$ is any coadjoint orbit with 
$\langle\Oc,\zg\rangle=\{0\}$, then for every $\xi\in\Oc$ one has 
$[\hg_2,\mg]\subseteq\hg_1=\zg\subseteq\Ker\xi$, hence $\hg_1\subseteq\hg_2\subseteq\hg(\xi)$, 
and this shows that $1,2\in J(\xi)$. 
Consequently the coadjoint orbit $\Oc_0$ of $\xi_0$ satisfies 
 $e(\Oc_0)\prec e(\Oc)$, and this shows that $e(\Oc)\neq e(\hg)$. 
\end{proof}

\begin{lemma}\label{interm}
In Proposition~\ref{jump} if $[\mg,\ng]\subseteq\Ker\widetilde{\xi}$, 
$J(\widetilde{\xi})=e(\widetilde{\ng})$ 
and $J(\widetilde{\xi}\vert_\ng)=e(\ng)$, 
then $J(\widetilde{\xi}\vert_{\mg})=e(\mg)$. 
\end{lemma}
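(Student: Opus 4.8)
The plan is to argue by contradiction, exploiting the minimality built into $e(\widetilde{\ng})$. First I would observe that the hypothesis $[\mg,\ng]\subseteq\Ker\widetilde{\xi}$ lets Proposition~\ref{grad} apply, so the isotropy splits as $\widetilde{\ng}(\widetilde{\xi})=\mg(\widetilde{\xi}\vert_\mg)\dotplus\ng(\widetilde{\xi}\vert_\ng)$ and Proposition~\ref{jump} becomes available. Feeding in the two standing hypotheses $J(\widetilde{\xi})=e(\widetilde{\ng})$ and $J(\widetilde{\xi}\vert_\ng)=e(\ng)$ yields
$$e(\widetilde{\ng})=e(\ng)\sqcup\bigl(k+J(\widetilde{\xi}\vert_\mg)\bigr).$$
The whole point is then to show that the $\mg$-component $J(\widetilde{\xi}\vert_\mg)$ is forced to coincide with the minimal jump set $e(\mg)$.

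Suppose it does not. Since $e(\mg)$ is the $\prec$-minimum over all coadjoint orbits of $\mg\simeq\widetilde{\ng}/\ng$ (with its induced Jordan-H\"older series), totality of $\prec$ gives the strict inequality $e(\mg)\prec J(\widetilde{\xi}\vert_\mg)$. I would then pick a functional $\eta_0\in\mg^*$ realizing the minimum, i.e.\ with jump set $e(\mg)$ in the quotient series, and assemble a test functional $\widetilde{\eta}\in\widetilde{\ng}^*$ by declaring $\widetilde{\eta}\vert_\ng:=\widetilde{\xi}\vert_\ng$ and $\widetilde{\eta}\vert_\mg:=\eta_0$, using the vector direct sum $\widetilde{\ng}=\mg\dotplus\ng$. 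Because $[\mg,\ng]\subseteq\ng$ and $\widetilde{\eta}$ agrees with $\widetilde{\xi}$ on $\ng$, one still has $[\mg,\ng]\subseteq\Ker\widetilde{\eta}$, so Propositions~\ref{grad} and~\ref{jump} apply to $\widetilde{\eta}$ as well and give $J(\widetilde{\eta})=e(\ng)\sqcup(k+e(\mg))$.

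The remaining ingredient is an order-comparison statement for the disjoint-union decomposition: for a fixed $A\subseteq\{1,\dots,k\}$ and any $B_1,B_2\subseteq\{1,\dots,m-k\}$ one has $A\sqcup(k+B_1)\prec A\sqcup(k+B_2)$ if and only if $B_1\prec B_2$. This is immediate from the definition of $\prec$, since $A$ cancels in both symmetric differences while the surviving parts $k+(B_1\setminus B_2)$ and $k+(B_2\setminus B_1)$ are mere shifts of $B_1\setminus B_2$ and $B_2\setminus B_1$, and translating by $k$ preserves which minimum is smaller. Applying this with $A=e(\ng)$, $B_1=e(\mg)$, $B_2=J(\widetilde{\xi}\vert_\mg)$ turns $e(\mg)\prec J(\widetilde{\xi}\vert_\mg)$ into $J(\widetilde{\eta})\prec J(\widetilde{\xi})=e(\widetilde{\ng})$. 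But $J(\widetilde{\eta})$ is the jump set of an honest coadjoint orbit of $\widetilde{\ng}$, so $e(\widetilde{\ng})\preceq J(\widetilde{\eta})$, which contradicts $J(\widetilde{\eta})\prec e(\widetilde{\ng})$ in the total order $\prec$. Hence $J(\widetilde{\xi}\vert_\mg)=e(\mg)$.

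I expect the main delicacy to be bookkeeping rather than depth: one must check that $e(\mg)$, the jump set of the chosen $\eta_0$, and the $\mg$-component produced by Proposition~\ref{jump} are all computed with respect to the \emph{same} Jordan-H\"older series on $\mg\simeq\widetilde{\ng}/\ng$, so that identifying $\mg$ as a subalgebra with the quotient is harmless. Once that compatibility is pinned down, the elementary order-comparison lemma does essentially all the work, and verifying that $\widetilde{\eta}$ still meets the hypothesis of Proposition~\ref{grad} is the only other point requiring attention.
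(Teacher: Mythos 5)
Your proof is correct and follows essentially the same route as the paper: the key move in both is to glue $\widetilde{\xi}\vert_\ng$ with a functional on $\mg$ into a test element of $\widetilde{\ng}^*$ still satisfying $[\mg,\ng]\subseteq\Ker(\cdot)$, apply Propositions~\ref{grad} and~\ref{jump} to it, and compare jump sets via the shift-invariance of $\prec$ on the $\mg$-component. The only difference is presentational: the paper runs the comparison directly against an arbitrary $\zeta\in\mg^*$ to conclude $J(\widetilde{\xi}\vert_\mg)\preceq J(\zeta)$ for all $\zeta$, whereas you specialize to a minimizer and phrase it as a contradiction.
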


\begin{proof}
Propositions \ref{grad}~and~\ref{jump} imply 
$\widetilde{\ng}(\widetilde{\xi})
=\mg(\widetilde{\xi}\vert_{\mg})\dotplus\ng(\widetilde{\xi}\vert_{\ng})$ 
and 
\begin{equation}\label{iter_proof_eq1}
J(\widetilde{\xi})=J(\widetilde{\xi}\vert_{\ng})\sqcup (k+J(\widetilde{\xi}\vert_{\mg})),
\end{equation} 
where $k=\dim\ng$. 
For arbitrary $\zeta\in\mg^*$  
define $\widetilde{\zeta}\in\widetilde{\ng}^*$ by 
$\widetilde{\zeta}\vert_{\mg}:=\zeta$ and 
$\widetilde{\zeta}\vert_{\ng}:=\widetilde{\xi}\vert_{\ng}$. 
Then $[\mg,\ng]\subseteq\Ker\widetilde{\zeta}$  
hence, by Propositions \ref{grad} and \ref{jump} as above, 
one obtains 
$J(\widetilde{\zeta})=J(\widetilde{\zeta}\vert_{\ng})\sqcup (k+J(\widetilde{\zeta}\vert_{\mg}))
=J(\widetilde{\xi}\vert_{\ng})\sqcup (k+J(\zeta))$. 
It then easily follows by \eqref{iter_proof_eq1} 
along with 
$J(\widetilde{\xi})=e(\widetilde{\ng})$ 
that $J(\widetilde{\xi}\vert_{\mg})\preceq J(\zeta)$ 
for all $\zeta\in\mg^*$,  
hence $J(\widetilde{\xi}\vert_{\mg})=e(\mg)$. 
\end{proof}

\begin{lemma}\label{rec}
Let $\widetilde{\ng}=\mg\ltimes\ng$, 
where $\widetilde{\ng}$ is a nilpotent Lie algebra 
and $\mg$ is a Lie algebra with 1-dimensional center and generic flat coadjoint orbits. 
Fix any Jordan-H\"older series of $\widetilde{\ng}$ that passes through~$\ng$ 
and endow $\mg$ and $\ng$ with their corresponding Jordan-H\"older series as in Proposition~\ref{jump}. 
Assume that $\Xc\subseteq\ng^*$ is any subset with the following properties: 
\begin{itemize}
\item The set $\Xc$ is $\Ad_N^*$-invariant and for every $\xi\in\Xc$ one has $J(\xi)=e(\ng)$. 
\item One has a subalgebra $\sg\subseteq\ng$ and a linear subspace $\Vc\subseteq\ng$ 
with $\ng=\sg\dotplus\Vc$, $[\mg,\sg]=\{0\}$, $[\mg,\ng]\subseteq\Vc$, and 
for every $N$-coadjoint orbit $\Oc\subseteq\Xc$ 
there exists $\eta\in\Oc$ with $\Vc\subseteq\Ker\eta$ 
and $\ng(\eta)=\sg$. 
\end{itemize}
Let $\widetilde{\Xc}\subseteq\widetilde{\ng}^*$ be any $\Ad_{\widetilde{N}}^*$-invariant subset 
with $J(\widetilde{\xi})=e(\widetilde{\ng})$ and $\widetilde{\xi}\vert_{\ng}\in\Xc$ for all $\widetilde{\xi}\in\widetilde{\Xc}$. 
Then for every $\widetilde{N}$-coadjoint orbit $\widetilde{\Oc}\subseteq\widetilde{\Xc}$ 
and every hyperplane $\Vc_0\subset\mg$ with $\Zc(\mg)+\Vc_0=\mg$ 
there exists $\widetilde{\xi}_0\in\widetilde{\Oc}$  
with $\Vc_0+\Vc\subseteq\Ker\widetilde{\xi}_0$, $\mg(\widetilde{\xi}_0\vert_{\mg})=\Zc(\mg)$, 
$\ng(\widetilde{\xi}_0\vert_{\ng})=\sg$, 
and $\widetilde{\ng}(\widetilde{\xi}_0)
=\Zc(\mg)\dotplus\sg$. 
\end{lemma}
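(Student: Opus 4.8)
The plan is to take an arbitrary $\widetilde{N}$-coadjoint orbit $\widetilde{\Oc}\subseteq\widetilde{\Xc}$ and a hyperplane $\Vc_0\subset\mg$ with $\Zc(\mg)+\Vc_0=\mg$, and to produce the desired functional $\widetilde{\xi}_0$ in two stages, first arranging the restriction to $\ng$ and then adjusting the restriction to $\mg$, all while staying inside the orbit by applying coadjoint transformations. First I would pick any $\widetilde{\xi}\in\widetilde{\Oc}$. By hypothesis $\widetilde{\xi}\vert_\ng\in\Xc$, so its $N$-coadjoint orbit $\Oc:=\Ad^*_N(N)(\widetilde{\xi}\vert_\ng)$ lies in $\Xc$, and the second bullet furnishes some $\eta\in\Oc$ with $\Vc\subseteq\Ker\eta$ and $\ng(\eta)=\sg$. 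Since $\eta$ and $\widetilde{\xi}\vert_\ng$ lie in the same $N$-orbit, there is $n\in N\subseteq\widetilde{N}$ with $\Ad^*_N(n)(\widetilde{\xi}\vert_\ng)=\eta$; because $\ng$ is an ideal, $\Ad^*_{\widetilde{N}}(n)$ maps $\widetilde{\Oc}$ into itself and restricts on $\ng^*$ to the $N$-coadjoint action, so replacing $\widetilde{\xi}$ by $\Ad^*_{\widetilde{N}}(n)\widetilde{\xi}$ we may assume from the start that $\widetilde{\xi}\vert_\ng=\eta$ satisfies $\Vc\subseteq\Ker\widetilde{\xi}$ and $\ng(\widetilde{\xi}\vert_\ng)=\sg$.

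Next I would exploit the decompositions already secured for the restriction to $\mg$. With $\Vc\subseteq\Ker\widetilde{\xi}$ and $[\mg,\ng]\subseteq\Vc$ from the second bullet, the hypothesis $[\mg,\ng]\subseteq\Ker\widetilde{\xi}$ of Propositions~\ref{grad} and~\ref{jump} holds, so $\widetilde{\ng}(\widetilde{\xi})=\mg(\widetilde{\xi}\vert_\mg)\dotplus\ng(\widetilde{\xi}\vert_\ng)$ and the jump set splits as in \eqref{jump_eq1}. Since $J(\widetilde{\xi})=e(\widetilde{\ng})$ by the definition of $\widetilde{\Xc}$ and $J(\widetilde{\xi}\vert_\ng)=e(\ng)$ because $\widetilde{\xi}\vert_\ng\in\Xc$ (so $J(\widetilde{\xi}\vert_\ng)=e(\ng)$ by the first bullet), Lemma~\ref{interm} applies and yields $J(\widetilde{\xi}\vert_\mg)=e(\mg)$. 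Now $\mg$ has $1$-dimensional center and generic flat coadjoint orbits, so its generic orbits are flat and $e(\mg)$ is the jump set of such an orbit; the condition $J(\widetilde{\xi}\vert_\mg)=e(\mg)$ forces $\widetilde{\xi}\vert_\mg$ to lie on a generic flat $M$-coadjoint orbit, whence $\mg(\widetilde{\xi}\vert_\mg)=\Zc(\mg)$. Consequently $\ng(\widetilde{\xi}\vert_\ng)=\sg$ and $\widetilde{\ng}(\widetilde{\xi})=\Zc(\mg)\dotplus\sg$ already hold for the current $\widetilde{\xi}$.

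It remains to kill $\widetilde{\xi}$ on the prescribed hyperplane $\Vc_0$ without disturbing any of the properties secured so far. The hard part will be this final normalization: I must move $\widetilde{\xi}$ within its orbit, using only inner automorphisms coming from $M$, so that additionally $\Vc_0\subseteq\Ker\widetilde{\xi}_0$, while keeping $\Vc\subseteq\Ker\widetilde{\xi}_0$, the isotropy data $\mg(\widetilde{\xi}_0\vert_\mg)=\Zc(\mg)$ and $\ng(\widetilde{\xi}_0\vert_\ng)=\sg$ intact. Because $[\mg,\sg]=\{0\}$ and $[\mg,\ng]\subseteq\Vc$, the coadjoint action of $m\in M$ only affects $\widetilde{\xi}$ through the $\Vc$-component and through $\mg^*$; since $[\mg,\sg]=\{0\}$ ensures $M$ fixes $\widetilde{\xi}$ on $\sg$ and the generic flat orbit through $\widetilde{\xi}\vert_\mg$ has dimension $\dim\mg-1$, the $M$-coadjoint orbit of $\widetilde{\xi}\vert_\mg$ is exactly the affine hyperplane complementary to $\Zc(\mg)$ at the prescribed value on $\Zc(\mg)$. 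I would therefore choose $m\in M$ so that $\Ad^*_M(m)(\widetilde{\xi}\vert_\mg)$ annihilates $\Vc_0$; this is possible precisely because $\Vc_0$ is a hyperplane transverse to $\Zc(\mg)$, so the affine orbit (a translate of $\Zc(\mg)^\perp$) meets $\Vc_0^\perp$ in exactly one point, fixing the value on $\Zc(\mg)$ as required. Setting $\widetilde{\xi}_0:=\Ad^*_{\widetilde{N}}(m)\widetilde{\xi}$, the invariance of the isotropy dimension under the coadjoint action and the computations of Propositions~\ref{grad} and~\ref{jump}, applied once more to $\widetilde{\xi}_0$ (whose hypothesis $[\mg,\ng]\subseteq\Ker\widetilde{\xi}_0$ persists since $\Vc\subseteq\Ker\widetilde{\xi}_0$), deliver all four conclusions simultaneously.
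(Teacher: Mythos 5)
Your proposal is correct and follows essentially the same route as the paper's proof: first move $\widetilde{\xi}$ by an element of $N$ to normalize $\widetilde{\xi}\vert_{\ng}$ (using that $\ng$ is an ideal), then invoke Propositions~\ref{grad} and~\ref{jump} together with Lemma~\ref{interm} to get $J(\widetilde{\xi}\vert_{\mg})=e(\mg)$, and finally use the flatness of generic $M$-orbits to kill $\Vc_0$ by an element of $M$ while the $\Ad(M)$-invariance of $\Vc$ and $[\mg,\sg]=\{0\}$ preserve the data already secured. The only detail left slightly implicit (and made explicit in the paper via the block-diagonal form of $\Ad_{\widetilde{N}}(x)$ for $x\in\mg$) is that $(\Ad^*_{\widetilde{N}}(m)\widetilde{\xi})\vert_{\mg}=\Ad^*_M(m)(\widetilde{\xi}\vert_{\mg})$ and $\Ad_{\widetilde{N}}(m)\Vc=\Vc$, but this is routine.
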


\begin{proof}
Let $\widetilde{\xi}\in\widetilde{\Oc}$ arbitrary. 
One has $\widetilde{\xi}\vert_{\ng}\in\Xc$, 
hence by using the hypothesis for the $N$-coadjoint orbit $\Ad^*_N(N)(\widetilde{\xi}\vert_{\ng})\subseteq\Xc$  
one finds $y\in\ng$ with 
$
\Vc\subseteq\Ker\eta$ and $\ng(\eta)=\sg$, 
where $\eta:=\Ad^*_N(y)(\widetilde{\xi}\vert_{\ng})$. 
Since $[\widetilde{\ng},\ng]\subseteq\ng$, 
one has $\Ad^*_N(y)(\widetilde{\xi}\vert_{\ng})=(\Ad^*_{\widetilde{N}}(y)\widetilde{\xi})\vert_{\ng}$. 
One thus finds $\widetilde{\xi}_1:=\Ad^*_{\widetilde{N}}(y)\widetilde{\xi}\in\widetilde{\Oc}$ 
with 
$
\Vc\subseteq\Ker\widetilde{\xi}_1$, 
and hence $[\mg,\ng]\subseteq\Ker\widetilde{\xi}_1$. 

One has $\widetilde{\xi}_1\in\widetilde{\Oc}\subseteq\widetilde{\Xc}$, 
so  
$\widetilde{\xi}_1\vert_{\ng}\in\Xc$ and $J(\widetilde{\xi}_1\vert_{\ng})=e(\ng)$ 
by hypothesis. 
Then $J(\widetilde{\xi}_1\vert_{\mg})=e(\mg)$ by Lemma~\ref{interm}. 
Since $\mg$ has generic flat coadjoint orbits, 
one then obtains 
$\Ker(\Ad^*_M(x)(\widetilde{\xi}_1\vert_{\mg}))=\Vc_0$ for some $x\in\mg$,  
with $\Vc_0\subset\mg$ as in the statement.  
Since $[\mg,\mg]\subseteq\mg$ and $[\mg,\ng]\subseteq\ng$, 
it follows that 
\begin{equation}\label{rec_proof_eq2}
\Ad_{\widetilde{N}}(x)=
\begin{pmatrix}
\Ad_M(x) & 0 \\
0 & \Ad_{\widetilde{N}}(x)\vert_{\ng}
\end{pmatrix} 
\end{equation}
with respect to the vector direct sum $\widetilde{\ng}=\mg\dotplus\ng$. 
In particular $\Ad^*_M(x)(\widetilde{\xi}_1\vert_{\mg})=(\Ad^*_{\widetilde{N}}(x)\widetilde{\xi}_1)\vert_{\mg}$, 
and we thus find $\widetilde{\xi}_0:=\Ad^*_{\widetilde{N}}(x)\widetilde{\xi}_1\in\widetilde{\Oc}$ 
with $\mg(\widetilde{\xi}_0\vert_{\mg})=\Zc(\mg)$ and $\Vc_0\subseteq\Ker\widetilde{\xi_0}$. 

Also, 
$$\begin{aligned}
\Ker(\widetilde{\xi}_0)
& 
=\Ker(\widetilde{\xi}_1\circ\Ad_{\widetilde{N}}(-x))
=\Ad_{\widetilde{N}}(x)(\Ker\widetilde{\xi}_1) \\
& 
\supseteq \Ad_{\widetilde{N}}(x)\Vc=\Vc\supseteq[\mg,\ng],
\end{aligned}
$$ 
where the equality $\Ad_{\widetilde{N}}(x)\Vc=\Vc$ follows by \eqref{rec_proof_eq2}  
since $x\in\mg$ and $[\mg,\Vc]\subseteq\Vc\subseteq\ng$.  
Now $\widetilde{\ng}(\widetilde{\xi}_0)
=\mg(\widetilde{\xi}_0\vert_{\mg})\dotplus\ng(\widetilde{\xi}_0\vert_{\ng})$ by Proposition~\ref{grad}. 
In addition, one has  
$$\begin{aligned}
\ng(\widetilde{\xi}_0\vert_{\ng})
&
=\ng((\Ad^*_{\widetilde{N}}(x)\widetilde{\xi}_1)\vert_{\ng}) 
=\ng(\Ad^*_{\widetilde{N}}(x)(\widetilde{\xi}_1\vert_{\ng}))
= \Ad_{\widetilde{N}}(x)(\ng(\widetilde{\xi}_1\vert_{\ng})) \\
&
=\Ad_{\widetilde{N}}(x)(\ng(\eta))
=\Ad_{\widetilde{N}}(x)\sg
=\sg
\end{aligned} 
$$
where the latter equality follows since $[\mg,\sg]=\{0\}$ and $x\in\mg$, 
and this completes the proof. 
\end{proof}

\begin{theorem}\label{main1}
Let $\ng$ be a nilpotent Lie algebra with a fixed Jordan-H\"older basis, 
and a distinguished subset of the corresponding Jordan-H\"older series, denoted  
$\{0\}=\ng_0\subseteq\ng_1\subseteq\cdots\subseteq\ng_q=\ng$. 
Assume that for every $j=1,\dots,q$ one has a subalgebra $\mg_j\subseteq\ng_j$ 
and a linear subspace $\Vc_j\subseteq\mg_j$ compatible with the above fixed basis, 
satisfying the following conditions: 
\begin{itemize}
\item The center $\zg_j$ of $\mg_j$ is 1-dimensional, $\mg_j=\zg_j\dotplus \Vc_j$, 
$\mg_j$ has flat generic coadjoint orbits, 
and one has the semidirect product decomposition $\ng_j=\mg_j\ltimes\ng_{j-1}$. 
\item One has $[\mg_j,\ng_{j-1}]\subseteq\Vc_1\dotplus\cdots\dotplus\Vc_{j-1}
$ 
and $[\mg_j,\zg_1+\cdots+\zg_{j-1}]=\{0\}$. 
\end{itemize}
Let 
$\Xc:=\{\xi\in\ng^*\mid J(\xi\vert_{\ng_j})=e(\ng_j)\text{ for }j=1,\dots,q\}$ 
and $\sg:=\zg_1+\cdots+\zg_q$.
Then one has: 
\begin{enumerate}
\item The set $\Xc$ is $\Ad^*_N$-invariant. 
\item For any $N$-coadjoint orbit $\Oc\subseteq\ng^*$ one has 
$\Oc\subseteq\Xc$ if and only if there exists  
$\xi\in\Oc$ with $\ng(\xi)=\sg$ 
and  
$
\Vc_1\dotplus\cdots\dotplus\Vc_q\subseteq\Ker\xi$, 
and then $\xi$ is uniquely determined by these properties.  
\end{enumerate}
\end{theorem}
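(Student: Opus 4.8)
The plan is to prove the two statements by induction on $q$, using the structure of the iterated semidirect product $\ng_q=\mg_q\ltimes\ng_{q-1}$ together with the three preparatory lemmas. The natural inductive setup is to fit the data into the hypotheses of Lemma~\ref{rec}: the role of $\mg$ is played by $\mg_q$ (which has $1$-dimensional center $\zg_q$ and generic flat coadjoint orbits), the role of $\ng$ by $\ng_{q-1}$, the subalgebra $\sg$ of Lemma~\ref{rec} by $\sg_{q-1}:=\zg_1+\cdots+\zg_{q-1}$, and the subspace $\Vc$ by $\Vc_1\dotplus\cdots\dotplus\Vc_{q-1}$. The structural conditions $[\mg_q,\ng_{q-1}]\subseteq\Vc_1\dotplus\cdots\dotplus\Vc_{q-1}$ and $[\mg_q,\zg_1+\cdots+\zg_{q-1}]=\{0\}$ supplied in the second bullet are exactly what Lemma~\ref{rec} demands for its subspace $\Vc$ and subalgebra $\sg$.

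First I would set $\Xc_j:=\{\xi\in\ng_j^*\mid J(\xi\vert_{\ng_i})=e(\ng_i)\text{ for }i=1,\dots,j\}$, so that $\Xc=\Xc_q$, and prove by induction on $j$ that each $\Xc_j$ is $\Ad^*_{N_j}$-invariant and satisfies the orbit description in part (2). The base case $j=1$ is where Lemma~\ref{obv} enters: since $\mg_1=\ng_1$ has $1$-dimensional center $\zg_1$, any coadjoint orbit $\Oc\subseteq\Xc_1$ has $e(\Oc)=e(\ng_1)$, so Lemma~\ref{obv} gives $\langle\Oc,\zg_1\rangle\ne\{0\}$; combined with generic flatness of $\mg_1$ this produces the required $\xi$ with $\ng_1(\xi)=\zg_1$ and $\Vc_1\subseteq\Ker\xi$. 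For the inductive step, given the orbit description for $\Xc_{q-1}$ playing the role of the set $\Xc$ in Lemma~\ref{rec}, and taking $\widetilde{\Xc}:=\Xc_q$, I would verify the two bullet hypotheses of Lemma~\ref{rec} from the inductive hypothesis and the structural conditions, then invoke the lemma to extract, for each $\widetilde{N}$-orbit $\widetilde{\Oc}\subseteq\Xc_q$, a functional $\widetilde{\xi}_0$ with $\Vc_0+\Vc\subseteq\Ker\widetilde{\xi}_0$, $\mg_q(\widetilde{\xi}_0\vert_{\mg_q})=\zg_q$, $\ng_{q-1}(\widetilde{\xi}_0\vert_{\ng_{q-1}})=\sg_{q-1}$, and $\widetilde{\ng}(\widetilde{\xi}_0)=\zg_q\dotplus\sg_{q-1}=\sg$. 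Choosing the hyperplane $\Vc_0\subset\mg_q$ to be $\Vc_q$ (which indeed satisfies $\zg_q+\Vc_q=\mg_q$ since $\mg_q=\zg_q\dotplus\Vc_q$) then yields $\Vc_1\dotplus\cdots\dotplus\Vc_q\subseteq\Ker\widetilde{\xi}_0$ and $\ng(\widetilde{\xi}_0)=\sg$, which is exactly the required $\xi$.

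For the $\Ad^*_N$-invariance in part (1), I would note that the jump sets $J(\xi\vert_{\ng_j})$ depend only on the restriction $\xi\vert_{\ng_j}$, and since each $\ng_j$ is an ideal of $\ng$, the map $\xi\mapsto\xi\vert_{\ng_j}$ intertwines $\Ad^*_N$ with $\Ad^*_{N_j}$ up to the $N$-action; invariance of the condition $J(\xi\vert_{\ng_j})=e(\ng_j)$ under each $\Ad^*_{N_j}$-orbit (a property of the jump set being an orbit invariant, as recalled in the Preliminaries) then propagates to $\Ad^*_N$-invariance of $\Xc$. For the converse direction of the biconditional and for \emph{uniqueness} of $\xi$, I would argue that the conditions $\ng(\xi)=\sg$ and $\Vc_1\dotplus\cdots\dotplus\Vc_q\subseteq\Ker\xi$ pin down $\xi$ on the complementary subspace: since $e(\Oc)$ is determined and $\ng=\ng(\xi)\dotplus\ng_{e(\Oc)}$, the restriction $\xi\vert_{\ng_{e(\Oc)}}$ is a diffeomorphic image of the orbit coordinate, and the vanishing on $\Vc_1\dotplus\cdots\dotplus\Vc_q$ together with the complementarity forces a unique point. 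I expect the uniqueness assertion to be the main obstacle, since it requires carefully reconciling the jump-set decomposition $e(\ng)=\bigsqcup_j(\dim\ng_{j-1}+J(\cdot\vert_{\mg_j}))$ coming from iterating Proposition~\ref{jump} with the transversality $\ng=\sg\dotplus(\Vc_1\dotplus\cdots\dotplus\Vc_q)$, so as to show that no two distinct functionals in a single orbit can simultaneously meet both conditions.
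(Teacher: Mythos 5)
Your overall strategy coincides with the paper's: induction on $q$, with the inductive step carried by Lemma~\ref{rec} under exactly the identifications you propose ($\mg=\mg_q$, $\ng=\ng_{q-1}$, $\sg=\zg_1+\cdots+\zg_{q-1}$, $\Vc=\Vc_1\dotplus\cdots\dotplus\Vc_{q-1}$, and the hyperplane $\Vc_0=\Vc_q$), and uniqueness obtained from the fact that the restriction map $\Oc\to\ng_e^*$ is a diffeomorphism together with the identification $\Vc_1\dotplus\cdots\dotplus\Vc_q=\ng_e$. Your base case via Lemma~\ref{obv} and generic flatness is a reasonable (indeed more explicit) version of what the paper dismisses as clear, and your invariance argument for~(1) is an informal version of the citation the paper makes to Pedersen.

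There is, however, one genuine gap: the implication ``if there exists $\xi\in\Oc$ with $\ng(\xi)=\sg$ and $\Vc_1\dotplus\cdots\dotplus\Vc_q\subseteq\Ker\xi$, then $\Oc\subseteq\Xc$'' is never actually argued. The paragraph you devote to ``the converse direction of the biconditional and uniqueness'' only proves uniqueness: showing that the two conditions pin down a single point \emph{within an orbit already known to lie in} $\Xc$ (where $e(\Oc)=e(\ng)$ and $\ng_{e(\Oc)}=\Vc_1\dotplus\cdots\dotplus\Vc_q$) says nothing about why an arbitrary orbit admitting such a $\xi$ must have minimal jump sets at every level $\ng_j$. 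This direction does not follow from transversality or dimension counts; it needs a separate inductive argument. The paper closes it as follows: from $\widetilde{\ng}(\widetilde{\xi})=\Zc(\mg)+\zg_1+\cdots+\zg_q$ and $[\mg,\ng]\subseteq\Ker\widetilde{\xi}$, Proposition~\ref{grad} gives the splitting $\widetilde{\ng}(\widetilde{\xi})=\mg(\widetilde{\xi}\vert_{\mg})\dotplus\ng(\widetilde{\xi}\vert_{\ng})$, whence $\ng(\widetilde{\xi}\vert_{\ng})=\zg_1+\cdots+\zg_q$ and the induction hypothesis yields $\widetilde{\xi}\vert_{\ng}\in\Xc$; then Proposition~\ref{jump} decomposes $J(\widetilde{\xi})$ as $J(\widetilde{\xi}\vert_{\ng})\sqcup(k+J(\widetilde{\xi}\vert_{\mg}))$ and minimality of both pieces forces $J(\widetilde{\xi})=e(\widetilde{\ng})$. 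You clearly have the relevant tools in view (you invoke the iterated jump-set decomposition in your uniqueness discussion), but as written the converse implication is missing from your plan and should be supplied along these lines.
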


\begin{proof}
The fact that $\Xc$ is $\Ad^*_N$-invariant follows by \cite[page 426]{Pd84}. 
Then the proof proceeds by induction on $q\ge1$. 
If $q=1$, then $\ng=\ng_1=\mg_1$, and the conclusion is clear. 

Now assume the assertion was proved for some value of $q$, 
and use the notation from the statement. 
To perform the induction step, 
let $\widetilde{\ng}=\mg\ltimes\ng$ be a nilpotent Lie algebra 
with a fixed Jordan-H\"older series that contains the Jordan-H\"older series of $\ng$, 
where $\mg=\zg\dotplus\Vc_0$ is a Lie algebra with 1-dimensional $\zg$ and generic flat coadjoint orbits, 
and assume $[\mg,\ng]\subseteq\Vc_1\dotplus\cdots\dotplus\Vc_q$ and $[\mg,\sg]=\{0\}$, 
where $\sg:=\zg_1+\cdots+\zg_q$. 
Define $\Xc$ as in the statement and let 
$\widetilde{\Xc}:=\{\widetilde{\xi}\in\widetilde{\ng}^*\mid J(\widetilde{\xi})=e(\widetilde{\ng})
\text{ and }\widetilde{\xi}\vert_{\ng}\in\Xc\}$, 
that is, $\widetilde{\Xc}$ is the set associated to $\widetilde{\ng}$ 
in the same way as $\Xc$ is associated to $\ng$ in the statement of the theorem. 
Then Lemma~\ref{rec} implies that every $\widetilde{N}$-coadjoint orbit $\widetilde{\Oc}\subseteq\widetilde{\Xc}$ 
has the properties we are looking for. 
Conversely, select 
any $\widetilde{N}$-coadjoint orbit $\widetilde{\Oc}\subseteq\widetilde{\ng}^*$ 
for which there exists $\widetilde{\xi}\in\widetilde{\Oc}$ with $\widetilde{\ng}(\widetilde{\xi})=\Zc(\mg)+\zg_1+\cdots+\zg_q$, 
$\Vc_1\dotplus\cdots\dotplus\Vc_q\dotplus\Vc_0\subseteq\Ker\xi$.   
Then Proposition~\ref{grad} implies ${\ng}(\widetilde{\xi})=\mg(\widetilde{\xi}\vert_{\mg})\dotplus\ng(\widetilde{\xi}\vert_{\ng})$, 
and $\ng(\widetilde{\xi}\vert_{\ng})=\zg_1+\cdots+\zg_q$, 
hence $\widetilde{\xi}\vert_{\ng}\in\Xc$ by the induction hypothesis. 
Moreover, one can prove by using Proposition~\ref{jump} that $J(\widetilde{\xi})=e(\widetilde{\ng})$.  
Therefore $\widetilde{\xi}\in\widetilde{\Xc}$, hence $\widetilde{\Oc}\subseteq\widetilde{\Xc}$.   

Finally, for proving uniqueness of $\xi\in\Oc\subseteq\Xc$ in Assertion~(ii),   
consider the jump set $e:=e(\widetilde{\Oc})$ and note that  
$\Vc_1\dotplus\cdots\dotplus\Vc_q=\ng_e$ (see the notation in Preliminaries). 
But the restriction map $\Oc\to\ng_e^*$, $\xi\mapsto\xi\vert_{\ng_e}$ is a diffeomorphism, 
hence there exists a unique $\xi\in\Oc$ with $\xi\vert_{\ng_e}=0$, 
and this completes the proof. 
\end{proof}

\begin{proposition}\label{main2}
In the framework of Theorem~\ref{main1}, 
if in addition $q=2$ and $\dim\Zc(\ng)=1$, then $\Xc=\{\xi\in\ng^*\mid J(\xi)=e(\ng)\}$. 
\end{proposition}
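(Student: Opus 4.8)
The inclusion $\Xc\subseteq\{\xi\in\ng^*\mid J(\xi)=e(\ng)\}$ is immediate from the definition of $\Xc$: since $q=2$ we have $\ng_2=\ng$, so the defining requirement $J(\xi\vert_{\ng_2})=e(\ng_2)$ is literally $J(\xi)=e(\ng)$. The entire content is therefore the reverse inclusion, and since the condition for $j=2$ is automatic, what I must show is that every $\xi$ with $J(\xi)=e(\ng)$ satisfies the one remaining condition $J(\xi\vert_{\ng_1})=e(\ng_1)$.

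First I would pin down the center. Because $\zg_1$ is central in $\mg_1=\ng_1$ and $[\mg_2,\zg_1]=\{0\}$ by hypothesis, one gets $\zg_1\subseteq\Zc(\ng)$; as $\dim\zg_1=1=\dim\Zc(\ng)$ this forces $\Zc(\ng)=\zg_1$. Now fix $\xi$ with $J(\xi)=e(\ng)$, so its orbit $\Oc$ has $e(\Oc)=e(\ng)$. Applying Lemma~\ref{obv} to $\hg=\ng$ (legitimate since $\dim\Zc(\ng)=1<\dim\ng$) gives $\langle\Oc,\zg_1\rangle\ne\{0\}$, i.e. $\langle\xi,z_1\rangle\ne0$ for a generator $z_1$ of $\zg_1$; this pairing is constant along $\Oc$ because $z_1$ is central.

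It then remains to convert the nonvanishing of this central pairing into flatness of $\xi\vert_{\ng_1}$. Here I would use that $\ng_1=\mg_1$ is a nilpotent Lie algebra with one-dimensional center $\zg_1$ and generic flat coadjoint orbits, together with the characterization that for such an algebra a functional $\eta$ is flat (that is, $\ng_1(\eta)=\zg_1$) \emph{if and only if} $\langle\eta,z_1\rangle\ne0$. Granting this, $\xi\vert_{\ng_1}$ is flat, so $\ng_1(\xi\vert_{\ng_1})=\zg_1$; since $\mg_1$ is square integrable its flat locus is exactly the generic (Zariski-open) stratum, whose jump set is $e(\ng_1)$, whence $J(\xi\vert_{\ng_1})=e(\ng_1)$. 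Combined with $J(\xi)=e(\ng)$ this places $\xi$ in $\Xc$, finishing the reverse inclusion.

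The one step that is not formal, and which I expect to be the main obstacle, is precisely the converse half of the flatness criterion for $\mg_1$: Lemma~\ref{obv} supplies only the easy implication ``flat $\Rightarrow\langle\eta,z_1\rangle\ne0$'', whereas I need ``$\langle\eta,z_1\rangle\ne0\Rightarrow$ flat'', which does not follow from Lemma~\ref{obv} alone. I would establish it by the standard Pfaffian argument: the Pfaffian $\mathrm{Pf}(\eta)$ of the form $(X,Y)\mapsto\eta([X,Y])$ on $\mg_1/\zg_1$ is an $\Ad^*_{M_1}$-invariant polynomial (nilpotency makes $\Ad$ unimodular), and for a square-integrable algebra with one-dimensional center the generic orbits are codimension one and fill up the level hypersurfaces of the central coordinate $\eta\mapsto\langle\eta,z_1\rangle$; hence every invariant polynomial is a function of that coordinate, forcing $\mathrm{Pf}(\eta)=c\,\langle\eta,z_1\rangle^{\dim(\mg_1/\zg_1)/2}$ with $c\ne0$. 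As flatness is exactly $\mathrm{Pf}(\eta)\ne0$, it is equivalent to $\langle\eta,z_1\rangle\ne0$, which is what the argument needs. (An alternative route avoiding this input would be to move $\xi$ inside $\Oc$ to a point killing $[\mg_2,\ng_1]$ and then split $J$ via Propositions~\ref{grad} and~\ref{jump} as in Lemma~\ref{interm}; but producing such a representative seems to require essentially the same flatness fact, so I would favor the direct Pfaffian argument above.)
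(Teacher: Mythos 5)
Your proof is correct and follows essentially the same route as the paper's: Lemma~\ref{obv} gives $\Zc(\ng)\cap\Ker\xi=\{0\}$, the hypotheses $[\mg_2,\zg_1]=\{0\}$ and $\dim\Zc(\ng)=1$ force $\zg_1=\Zc(\ng)$, and the flatness criterion for $\mg_1=\ng_1$ finishes. The only difference is that you spell out the Moore--Wolf Pfaffian argument for the step ``$\langle\eta,z_1\rangle\ne0\Rightarrow\ng_1(\eta)=\zg_1$'', which the paper simply invokes as part of the ``generic flat coadjoint orbits'' package from \cite{MW73} and \cite{CG90}; your justification of that step is sound.
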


\begin{proof}
By definition, 
$\Xc:=\{\xi\in\ng^*\mid J(\xi\vert_{\ng_j})=e(\ng_j)\text{ for }j=1,2\}$, 
so we must prove that if $\xi\in\ng^*$ and $J(\xi)=e(\ng)$, 
then $J(\xi\vert_{\ng_1})=e(\ng_1)$. 

In fact, if $J(\xi)=e(\ng)$, then by Lemma~\ref{obv} one has $\Zc(\ng)\cap\Ker\xi=\{0\}$. 
On the other hand, since $\ng=\mg_1\ltimes\ng_1$ and $[\mg_1,\zg_1]=\{0\}$, 
it follows that $\zg_1\subseteq\Zc(\ng)$.
Therefore $\zg_1\cap\Ker(\xi\vert_{\ng_1})=\{0\}$, 
hence $J(\xi\vert_{\ng_1})=e(\ng_1)$ since $\ng_1$ has 1-dimensional center and generic flat coadjoint orbits. 
\end{proof}

\begin{corollary}\label{main3}
Assume the setting of Theorem~\ref{main1}. 
If $\pi\colon N\to\Bc(\Hc)$ is any unitary irreducible representation associated with the coadjoint orbit $\Oc$ 
of some $\xi\in\ng^*$ with $(\zg_1\cup\cdots\cup\zg_q)\cap\Ker\xi=\{0\}$  
and  
$
\Vc_1\dotplus\cdots\dotplus\Vc_q\subseteq\Ker\xi$, 
then $\Oc\subseteq\Xc$ and 
$\Vert(\pi(\cdot)f\mid h)\Vert_{L^2(N/S)}^2=C\Vert f\Vert^2\Vert h\Vert^2$ for all $f,h\in\Hc$, 
where $C>0$ is a constant depending on~$\Oc$.
\end{corollary}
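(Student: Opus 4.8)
The plan is to reduce the square-integrability formula to Proposition~\ref{orthog} after locating $\Oc$ inside $\Xc$, so the first task is to compute $\ng(\xi)$ for the given $\xi$. I would show that the two hypotheses on $\xi$ — namely $\Vc_1\dotplus\cdots\dotplus\Vc_q\subseteq\Ker\xi$ and $(\zg_1\cup\cdots\cup\zg_q)\cap\Ker\xi=\{0\}$ — force $\ng(\xi)=\sg$. Granting this, the kernel hypothesis is exactly the second condition in Theorem~\ref{main1}(ii), so the ``$\Leftarrow$'' direction of that equivalence yields $\Oc\subseteq\Xc$; in particular $J(\xi)=e(\ng_q)=e(\ng)=:e$.

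To prove $\ng(\xi)=\sg$ I would argue by induction on $q$ along $\ng=\mg_q\ltimes\ng_{q-1}$, mirroring Lemma~\ref{rec}. The base case $q=1$ is immediate, since $\ng=\mg_1$ has $1$-dimensional center $\zg_1$ and generic flat orbits, $\Vc_1\subseteq\Ker\xi$, and $\xi\vert_{\zg_1}\ne0$, whence $\ng(\xi)=\zg_1=\sg$. For the inductive step, the inclusion $[\mg_q,\ng_{q-1}]\subseteq\Vc_1\dotplus\cdots\dotplus\Vc_{q-1}\subseteq\Ker\xi$ lets me apply Proposition~\ref{grad} to obtain $\ng(\xi)=\mg_q(\xi\vert_{\mg_q})\dotplus\ng_{q-1}(\xi\vert_{\ng_{q-1}})$. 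The restriction $\xi\vert_{\ng_{q-1}}$ inherits both hypotheses (it kills $\Vc_1\dotplus\cdots\dotplus\Vc_{q-1}$ and is nonzero on each $\zg_j$, $j\le q-1$), and $\ng_{q-1}$ carries the inherited layer data, so the induction hypothesis gives $\ng_{q-1}(\xi\vert_{\ng_{q-1}})=\zg_1+\cdots+\zg_{q-1}$. For the top layer, $\mg_q$ has $1$-dimensional center and generic flat orbits, $\Vc_q\subseteq\Ker\xi$, and $\xi\vert_{\zg_q}\ne0$; as in the proof of Proposition~\ref{main2}, nonvanishing on the center forces minimal isotropy, so $\mg_q(\xi\vert_{\mg_q})=\zg_q$, and thus $\ng(\xi)=\sg$. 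The one point I would make explicit is the implication ``nonzero on the $1$-dimensional center $\Rightarrow$ minimal isotropy'': for such a $\mg$ with $\zg=\RR Z$ the $\Ad^*$-invariant Pfaffian of $\zeta\mapsto\zeta([\cdot,\cdot])$ on $\mg/\zg$ is homogeneous and is constant along each flat orbit $\{\zeta(Z)=t\}$, hence a scalar multiple of $\zeta(Z)^{(\dim\mg-1)/2}$, so it vanishes precisely when $\zeta\vert_\zg=0$.

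With $J(\xi)=e$, Proposition~\ref{orthog} gives $\Vert(f\mid\pi(\cdot)h)\Vert_{L^2(\ng_e)}^2=C_e\Vert f\Vert^2\Vert h\Vert^2$ (with the two vectors interchanged, which does not affect the norm). It then remains to read this over $N/S$. As in the proof of Theorem~\ref{main1} one has $\ng_e=\Vc_1\dotplus\cdots\dotplus\Vc_q$, and since $\sg=\ng(\xi)$ the general splitting $\ng=\ng(\xi)\dotplus\ng_e$ becomes $\ng=\sg\dotplus\ng_e$. Consequently $\ng_e\ni X\mapsto\exp(X)S\in N/S$ is a global diffeomorphic cross-section, and in these exponential coordinates the $N$-invariant measure on $N/S$ (which exists, $N$ and $S$ being unimodular) agrees with $\de X$ on $\ng_e$ up to a positive constant. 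Transporting the coefficient to this cross-section identifies $L^2(N/S)$ with $L^2(\ng_e)$ and gives $\Vert(\pi(\cdot)f\mid h)\Vert_{L^2(N/S)}^2=C\Vert f\Vert^2\Vert h\Vert^2$ with $C$ a positive multiple of $C_e$, hence depending only on $\Oc$.

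The main obstacle I anticipate is exactly this last identification. Because $\sg=\ng(\xi)$ need not be an ideal, $\langle\eta,\cdot\rangle$ need not be constant along $\Oc$ on all of $\sg$, so $\pi\vert_S$ is in general not scalar and $\vert(\pi(\cdot)f\mid h)\vert$ need not be genuinely $S$-invariant; the symbol $L^2(N/S)$ must therefore be understood through the cross-section $\exp\ng_e$, that is, as the $L^2$-norm of the restriction of the coefficient to $\exp\ng_e$ against the invariant measure. Pinning down this convention and the constant relating the invariant measure on $N/S$ to $\de X$ on $\ng_e$ is the delicate bookkeeping; everything else is a direct assembly of Propositions~\ref{grad} and~\ref{orthog} with Theorem~\ref{main1}.
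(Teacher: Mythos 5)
Your proposal is correct and follows essentially the same route as the paper, whose entire proof reads: ``It is easily seen that $\ng(\xi)=\sg$, using Proposition~\ref{grad}. Then the assertion follows by Theorem~\ref{main1} and Proposition~\ref{orthog}.'' You have simply filled in the details the authors leave implicit --- the induction along $\ng_j=\mg_j\ltimes\ng_{j-1}$ to get $\ng(\xi)=\sg$, the Pfaffian argument for minimal isotropy on each layer, and the identification of $L^2(N/S)$ with $L^2(\ng_e)$ via the cross-section $\exp\ng_e$ --- all of which are sound.
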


\begin{proof}
It is easily seen that 
$\ng(\xi)=\sg$, using Proposition~\ref{grad}.  
Then the assertion follows by Theorem~\ref{main1} and Proposition~\ref{orthog}. 
\end{proof}

\begin{corollary}\label{main4}
Assume the setting of Theorem~\ref{main1} and let $[\Xc]\subseteq\widehat{N}$ 
be the subset of the unitary dual of~$N$ corresponding to the $N$-coadjoint orbits contained in~$\Xc$.  
Then the interior of $[\Xc]$ is dense in $\widehat{N}$, $[\Xc]$ is Hausdorff in its relative hull-kernel topology, 
and the Plancherel measure of $\widehat{N}\setminus[\Xc]$ is zero. 
\end{corollary}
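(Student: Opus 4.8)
The plan is to transport all three assertions to the coadjoint orbit space through the Kirillov homeomorphism $\widehat{N}\simeq\ng^*/\Ad^*_N(N)$ (see \cite{CG90}), under which $[\Xc]$ is the image of the $\Ad^*_N$-invariant set $\Xc$, and the relative hull-kernel topology becomes the relative quotient topology. The engine of the argument is a pair of structural facts about $\Xc\subseteq\ng^*$. First, for each $j$ the restriction map $r_j\colon\ng^*\to\ng_j^*$, $\xi\mapsto\xi\vert_{\ng_j}$, is linear and surjective, while the top layer $\{\zeta\in\ng_j^*\mid J(\zeta)=e(\ng_j)\}$ is Zariski-open and dense in $\ng_j^*$ by Pedersen's stratification \cite{Pd84,Pd89}; hence $\Xc=\bigcap_{j=1}^q r_j^{-1}(\{J=e(\ng_j)\})$ is a finite intersection of Zariski-open dense sets, so $\Xc$ is itself Zariski-open and dense in $\ng^*$. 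Second, the factor $j=q$ forces $\Xc\subseteq\ng^*_{e(\ng)}:=\{\xi\in\ng^*\mid J(\xi)=e(\ng)\}$, so that $\Xc$ lies inside a single layer, namely the top one.

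With these in hand, the first assertion is formal. The orbit projection $q\colon\ng^*\to\ng^*/\Ad^*_N(N)$ is open, as is every quotient map for a group action, so $[\Xc]=q(\Xc)$ is open in $\widehat{N}$ and therefore equals its own interior; and since $\Xc$ is dense while $q^{-1}$ of any nonempty open set is nonempty open, $[\Xc]$ is dense. Thus $\mathrm{int}\,[\Xc]=[\Xc]$ is dense in $\widehat{N}$.

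For the Hausdorff assertion I would use that the orbit space over a single layer is separated. On the top layer $\ng^*_{e(\ng)}$ the jump set is constantly $e(\ng)$, so $\ng_{e(\ng)}$ is a fixed complement to all the isotropy algebras, and normalizing the coordinates indexed by $e(\ng)$ to zero yields a global affine cross-section $\Sigma\subseteq\ng^*_{e(\ng)}$; by \cite[Lemma 1.6.1]{Pd89} together with the stratification results of \cite{Pd84,Pd89} the resulting bijection identifies $\ng^*_{e(\ng)}/\Ad^*_N(N)$ with $\Sigma$ as topological spaces, and this topology agrees with the relative hull-kernel topology on the corresponding part of $\widehat{N}$, which is in particular Hausdorff. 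Since $[\Xc]\subseteq[\ng^*_{e(\ng)}]$ and any subspace of a Hausdorff space is Hausdorff, $[\Xc]$ is Hausdorff in its relative topology.

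For the Plancherel assertion I would split $\widehat{N}\setminus[\Xc]=(\widehat{N}\setminus[\ng^*_{e(\ng)}])\cup([\ng^*_{e(\ng)}]\setminus[\Xc])$. The Plancherel measure of $\widehat{N}$ is carried by the top layer and, transported to the cross-section $\Sigma$, is equivalent to Lebesgue measure with density controlled by $\vert{\rm Pf}_{e(\ng)}(\xi)\vert$ as in Proposition~\ref{orthog} (see also \cite{CG90}); since $\ng^*\setminus\ng^*_{e(\ng)}$ is a proper algebraic subset it is Lebesgue-null, so the first piece is Plancherel-null. For the second piece, $\Sigma\cap\Xc$ is nonempty (as $\Sigma$ meets every top-layer orbit and $\Xc$ consists of such orbits) and Zariski-open in the affine space $\Sigma$, whence $\Sigma\setminus\Xc$ is a proper algebraic, hence Lebesgue-null, subset of $\Sigma$; as $[\ng^*_{e(\ng)}]\setminus[\Xc]$ corresponds to $\Sigma\setminus\Xc$, it too is Plancherel-null. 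I expect the Hausdorff step to be the main obstacle, since it genuinely depends on the separatedness of the orbit space over a single layer and on the identification of the quotient topology with the hull-kernel topology there; the other two assertions are essentially formal once $\Xc$ is known to be Zariski-open, dense, saturated, and contained in the top layer.
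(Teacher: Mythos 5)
Your argument is correct, but it takes a genuinely different route from the paper's. The paper's proof is a short sandwich: it observes that $\Xc_0\subseteq\Xc\subseteq\Xc_1$, where $\Xc_0$ is the minimal layer of the \emph{fine} stratification (defined like $\Xc$ but using every term of the Jordan--H\"older series, not only the distinguished subset $\ng_1\subseteq\cdots\subseteq\ng_q$) and $\Xc_1=\{\xi\mid J(\xi)=e(\ng)\}$ is the minimal layer of the \emph{coarse} stratification; it then quotes \cite{Pd84} for the fact that $[\Xc_0]$ and $[\Xc_1]$ are open, dense and Hausdorff, and \cite{Pu67} for the fact that $\widehat{N}\setminus[\Xc_0]$ is Plancherel-null. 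You replace the lower bound $\Xc_0\subseteq\Xc$ by a direct argument: $\Xc$ is a finite intersection of preimages, under the restriction surjections $\xi\mapsto\xi\vert_{\ng_j}$, of the Zariski-open dense generic layers of the $\ng_j^*$, hence is itself Zariski-open, dense and $\Ad^*_N$-saturated, so that $[\Xc]$ is in fact open and dense --- slightly more than the ``dense interior'' claimed; and you rederive the Plancherel assertion from the cross-section description of the Plancherel measure on the generic layer rather than from Puk\'anszky's theorem applied to $\Xc_0$. The Hausdorff step is essentially the same in both proofs: it rests on the containment $\Xc\subseteq\Xc_1$ and on Pedersen's result that the orbit space of the generic layer is Hausdorff, which your cross-section sketch reproves rather than cites. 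What your version buys is independence from the fine stratification and a sharper openness conclusion; what the paper's version buys is brevity, since all the topological and measure-theoretic work is delegated to the known properties of $\Xc_0$ and $\Xc_1$. (A terminological caution: what you call the ``top layer'' is what the paper, whose ordering $\prec$ makes $e(\ng)$ the \emph{minimum} jump set, calls the ``lowest layer''; the set is the same, namely the generic layer.)
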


\begin{proof}
Let $\Xc_0$ (respectively, $\Xc_1$) be the lowest layer in the fine (respectively, coarse) 
stratification of $\ng^*$ 
corresponding to the fixed Jordan-H\"older series in~$\ng$. 
We recall from \cite{Pd89} that $\Xc_0$ is defined similarly to $\Xc$ 
except that one uses the whole Jordan-H\"older series and not only its subset $\{\ng_j\mid j=1,\dots,q\}$, 
while $\Xc_1:=\{\xi\in\ng^*\mid J(\xi)=e(\ng)\}$, 
hence $\Xc_0\subseteq\Xc\subseteq\Xc_1$. 
If $[\Xc_j]$ is the subset of $\widehat{N}$ corresponding to the $N$-coadjoint orbits contained in~$\Xc_j$,   
then both $[\Xc_0]$ and $[\Xc_1]$ are open, dense, and Hausdorff in their relative hull-kernel topologies 
(see \cite{Pd84}), 
hence $[\Xc]$ has dense interior and is Hausdorff. 
Finally, the Plancherel measure of $\widehat{N}\setminus[\Xc_0]$ is zero \cite{Pu67}. 
\end{proof}

\subsubsection*{Examples}

\begin{example}
\normalfont
It follows by the proof of \cite[Lemma 6.5]{Wo13a} that the above Theorem~\ref{main1} 
applies to any nilpotent Lie algebra~$\ng$ 
that is the nilradical of a minimal Borel subalgebra  
of a split real form of of a complex semisimple Lie algebra, 
and in this case each $\mg_r$ is a Heisenberg algebra. 
\end{example}

\begin{example}
\normalfont 
If $\ng$ is any 3-step nilpotent Lie algebra with 1-dimensional center denoted by~$\zg$, 
then by \cite[Th. 5.1--5.2]{BB13} there exists a semidirect product decomposition 
$\widetilde{\ng}=\ag\ltimes\mg$, 
where $\ag$ is an abelian Lie algebra, $\mg$ is a 3-step nilpotent Lie algebra with its center equal to~$\zg$ and with generic flat coadjoint orbits, 
and there exists a linear subspace $\Vc\subseteq\mg$ with $\mg=\zg\dotplus\Vc$, 
$[\ag,\Vc]\subseteq\Vc$, and $[\ag,\zg]=\{0\}$. 
Hence by writing $\ag$ as a direct sum of 1-dimensional abelian algebras, one sees that $\ng$ is a Lie algebra to which Theorem~\ref{main1} applies. 
\end{example}

\section*{Acknowledgements}
This research has been partially supported by the Grant
of the Romanian National Authority for Scientific Research, CNCS-UEFISCDI,
project number PN-II-ID-PCE-2011-3-0131.

\end{document}